\newtheorem{theorem}{Theorem} 
\newtheorem{lemma}{Lemma}
\newtheorem{proposition}[lemma]{Proposition}
\theoremstyle{definition}
\newtheorem{remark}[lemma]{Remark}
\numberwithin{equation}{section}
\newcommand*{\de}{\partial}
\newcommand*{\p}{\partial}
\newcommand*{\N}{\mathbb{N}}
\newcommand*{\Z}{\mathbb{Z}}
\newcommand*{\R}{\mathbb{R}}
\renewcommand*{\S}{\mathbb{S}}
\newcommand*{\X}{\mathcal{X}}
\newcommand*{\loc}{{\text{\upshape{loc}}}}
\newcommand*{\lap}{\triangle}
\DeclareMathOperator{\im}{Im}
\DeclareMathOperator{\dist}{dist}
\author{Federico Franceschini}
\address{Institute for Advanced Study, Princeton, USA}
\email{ffederico@ias.edu}
\date{\today}
\author{Ovidiu Savin}
\address{Columbia University, New York, USA}
\email{savin@math.columbia.edu}
\title{Solutions to the Thin Obstacle Problem with non-2D frequency}
\begin{document}
\begin{abstract}
     For all odd positive integers $m$, we construct $\mu$-homogeneous solutions to the thin obstacle problem in $\mathbb{R}^3,$ with $\mu\in(m,m+1)$. For $m$ large, $\mu-m$ converges to $1$, so $\mu\neq m+\tfrac 1 2$.
     
    The restriction to odd values of $m$ is necessary: we show that, for all $n\ge 2$, there are no $\mu$-homogeneous solutions to the thin obstacle problem in $\mathbb{R}^n$ with $\mu \in \bigcup_{k\ge 0}(2k,2k+1)$.
    
    These examples also apply to $2$-valued $C^{1,1/2}$ stationary harmonic functions or $\mathbb{Z}/2\mathbb{Z}$-eigenfunctions of the laplacian on the sphere.     
\end{abstract}
\maketitle
\section{Introduction}
\subsection{The thin obstacle problem}

A solution to the the thin obstacle problem in $B_1 \subset \R^n$ is a continuous function $u$, even with respect to the thin space $\{x_n=0\}$, that satisfies
$$ \lap u = 0 \quad \mbox{in} \quad B_1 \setminus Z(u), \quad \quad Z(u):=\{x_n=0\} \cap \{u=0\},$$
and
\begin{equation}\label{eq:TOP}
u \ge 0\quad\text{and}\quad u_n \le 0\quad\text{on}\quad \{x_n=0\},
\end{equation}
where $u_n$ denotes the one-sided directional derivative in the positive $x_n$ direction.
 
Homogeneous solutions are of particular interest, since they appear in the blow-up analysis at points on the free boundary 
 $$\Gamma(u)=\de_{\{x_n=0\}} \, Z(u).$$ 
The homogeneity $\mu>0$ of a homogeneous solution is called its frequency.

Recall that, if $w$ is an homogeneous function of degree $\mu$ in $\R^n$, then on the unit sphere
$$\lap w =  \, L_\mu w ,\quad \quad \quad  \quad L_\mu w:=\triangle_{\S^{n-1}} w +\mu(\mu+n-2)w.$$

Thus finding $\mu$-homogeneous solutions of \eqref{eq:TOP} is equivalent to find solutions of the following problem on the upper hemisphere:
\begin{equation}\label{eq:TOPsphere}
\begin{cases}
L_\mu u =0 &\text{ in }\{x_n>0\}\cap \S^{n-1},\\ 
u\ge 0&\text{ on }\{x_n=0\}\cap\S^{n-1},\\
u_n\le 0&\text{ on }\{x_n=0\}\cap\S^{n-1},\\
u\cdot u_n=  0&\text{ on }\{x_n=0\}\cap\S^{n-1}.
\end{cases}
\end{equation}

\subsection{Main results}
Our convention for spherical coordinates in 3D is:
\begin{equation*}
    x = r \cos \theta\cos\varphi,\quad y = r\sin\theta\cos\varphi,\quad z = r\sin\varphi,
\end{equation*}
with $\theta\in \R/2\pi\Z$ and $\varphi\in[-\pi/2,\pi/2].$ The thin space is $\{z=0\}=\{\varphi=0\}$.
\begin{theorem}\label{thm:main}
    For each odd integer $m\ge1$ there is a homogeneous solution of the thin obstacle problem $u\colon \R^3\to \R,$ with homogeneity $\bar\mu_m\in (m,m+1)$ and contact set
    \begin{equation*}
        \{u =0 \}\cap\{ z=0\} =\{\dist(\theta,\tfrac{2\pi}{m}\Z )\le \bar \sigma_m\pi/m \}\cap\{  z=0\},
    \end{equation*}
    for some $\bar\sigma_m \in (0,1)$. 
    
    Furthermore, $m+1-\bar\mu_m\to 0^+$ and $\bar\sigma_m\to0^+$ as $m\to+\infty$ along odd numbers. 
\end{theorem}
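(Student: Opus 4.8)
The plan is to turn the problem into a one-parameter family of \emph{linear} mixed boundary value problems on a fixed domain, with the homogeneity $\mu$ playing the role of a spectral parameter, and to extract $\bar\mu_m$ and $\bar\sigma_m$ from a continuity argument together with a singular-perturbation analysis for the large-$m$ asymptotics. A $\mu$-homogeneous solution with the stated contact set is invariant under the dihedral group generated by the rotations $\theta\mapsto\theta+2\pi/m$ and the reflection $\theta\mapsto-\theta$, so it is determined by its restriction to the spherical triangle $D:=\{0\le\theta\le\pi/m\}\cap\{0\le\varphi\le\pi/2\}$, which has angles $\pi/2$, $\pi/2$, $\pi/m$. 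On $D$ the reflections impose $u_\theta=0$ on the meridian sides $\{\theta=0\}$ and $\{\theta=\pi/m\}$; one asks for $L_\mu u=0$ in the interior, for regularity at the cone point $N=\{\varphi=\pi/2\}$; and, writing $A=(0,0)$, $B=(\pi/m,0)$ and the free point $C=C_\sigma=(\sigma\pi/m,0)$, the Signorini system \eqref{eq:TOPsphere} becomes: $u=0$ and $u_\varphi\le0$ on the arc $[A,C]$, and $u_\varphi=0$ and $u\ge0$ on $[C,B]$. Thus the task reduces to producing $\sigma\in(0,1)$ and $\mu\in(m,m+1)$ for which this mixed problem has a nontrivial solution.

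\emph{The sign structure.} The key observation is that $u$ \emph{must change sign} in $D$: a function that were nonnegative near $[A,C]$ and vanished on $[A,C]$ would have $u_\varphi\ge 0$ there, hence $u\equiv0$ by the overdetermined Cauchy data $u=u_\varphi=0$ on $[A,C]$. So $u$ has a negative ``tongue'' hugging $[A,C]$ and is positive near $[C,B]$. Conversely, if $u$ is an eigenfunction of the \emph{linear} mixed problem on $D$ with Dirichlet data on $[A,C]$ and Neumann data on the rest of $\partial D$, having exactly two nodal domains $\Omega^-$ (containing a neighbourhood of $\mathrm{int}[A,C]$) and $\Omega^+$, then $u_\varphi=0$ on $[C,B]$ is automatic; after normalizing $u>0$ on $\Omega^+$ we get $u>0$ on $\mathrm{int}[C,B]$; and $u_\varphi\le0$ on $[A,C]$ follows from Hopf's lemma applied to $-u>0$ in $\Omega^-$, whose outer normal on $\{\varphi=0\}$ is $-\partial_\varphi$. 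It therefore suffices to exhibit such a two-nodal-domain eigenfunction with $\mu\in(m,m+1)$.

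\emph{Construction and asymptotics.} For fixed $m$ I would do this by a continuity argument in $\sigma$: track the eigenvalue $\lambda_m(\sigma)=\mu_m(\sigma)(\mu_m(\sigma)+1)$ and eigenfunction $u_\sigma$ of the mixed problem carrying the above nodal configuration — characterized, say, as the value for which the first mixed eigenvalues of $\Omega^+$ and of $\Omega^-$ coincide as the dividing arc issuing from $C$ is varied. One shows $\sigma\mapsto\mu_m(\sigma)$ is continuous on $(0,1)$, analyses the degenerate limits ($\sigma\to1^-$: full Dirichlet on $\{\varphi=0\}$; $\sigma\to0^+$: the Dirichlet arc collapses onto the corner $A$, with a blow-up there exhibiting the half-integer exponents of the $2$D Signorini problem), and invokes the intermediate value theorem to pick $\bar\sigma_m\in(0,1)$ with $\bar\mu_m:=\mu_m(\bar\sigma_m)\in(m,m+1)$ — simultaneously checking at that $\bar\sigma_m$ that $u_{\bar\sigma_m}$ really has two nodal domains in the stated position, so that the Signorini inequalities hold. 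For the asymptotics I would rescale $\psi:=m\theta$ and set $\mu=m+s$: the equation becomes $m^{-2}(u_{\varphi\varphi}-\tan\varphi\,u_\varphi)+\sec^2\varphi\,u_{\psi\psi}+\bigl(1+\tfrac{2s+1}{m}+O(m^{-2})\bigr)u=0$, a singular perturbation whose bounded $2\pi$-periodic-in-$\psi$ solutions concentrate in a thin layer along the equator; identifying the leading-order profile there (a second-order ODE with a one-sided Dirichlet/Neumann condition) should force $s\to1$ and the contact half-width $\bar\sigma_m\pi/m\to0$, whence $\bar\mu_m\to m+1$, so $\bar\mu_m\ne m+\tfrac12$ for $m$ large.

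\emph{Main obstacle.} The crux is the verification, along the whole branch, that the relevant eigenfunction satisfies the Signorini sign conditions, i.e.\ that its nodal set is a single arc issuing from $C$ with the negative side against $[A,C]$. The ground state does \emph{not} work — Hopf then gives $u_\varphi>0$ on the contact arc, the wrong sign — so one is forced to control a non-principal eigenvalue, whose nodal pattern can a priori reorganize as $\sigma$ varies; this, together with the corner analysis at $A$, $C$ and at the cone point $N$, is where the real work lies. A secondary difficulty is making the $m\to\infty$ matched asymptotics rigorous, since the layer width, the shrinking contact arc, and the free-boundary corner at $C$ interact and must be estimated uniformly in $m$.
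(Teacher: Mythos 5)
Your geometric reduction to the fundamental triangle $D$, the mixed Dirichlet/Neumann formulation, and the observation that the ground state of that mixed problem carries the wrong sign (Hopf forcing $u_\varphi>0$ on the Dirichlet arc) are all correct and correctly locate the obstruction. But that obstruction is a genuine gap, not unfinished bookkeeping. Tracking a non-principal mixed eigenvalue while controlling its nodal topology along a continuous branch in $\sigma$ has no ready mechanism: eigenvalues of the mixed problem can cross, nodal lines can pinch off or detach from $[A,C]$, and your proposed characterization (matching the first eigenvalues of $\Omega^+$ and $\Omega^-$ over a variable dividing arc from $C$) presupposes the nodal picture instead of establishing it. It is also not clear a priori that the branch you want ever takes values in $(m,m+1)$, and the endgame at $\sigma\to0^+,1^-$ --- where you would need to recognize the explicit endpoint solutions --- is left vague.

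The paper sidesteps the nodal problem entirely by \emph{not} seeking $u$ as a mixed eigenfunction. It takes $v\ge0$ to be the \emph{first} Dirichlet eigenfunction of $-\lap_\S$ on the slit lune $\Omega_{m,\sigma}=\{0<\theta<\pi/m,\ |\varphi|<\pi/2\}\setminus\Sigma_{m,\sigma}$, whose eigenvalue parameter $\mu_{m,\sigma}$ increases monotonically from $m$ (at $\sigma=0$, where $v=\im(x+iy)^m$) to $m+1$ (at $\sigma=1$, where $v=z\,\im(x+iy)^m$), and then defines $u$ through $\partial_\theta u=v$ together with an ODE for the axisymmetric part $h(\varphi)$. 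Because $v\ge0$, the Signorini sign conditions for $u$ on the equator hold automatically, for \emph{every} $\sigma$ --- this is precisely the step your route cannot guarantee. The only remaining condition is the scalar one $c(\sigma)=0$, where $L_\mu u=c(\sigma)\delta_N$; testing against the Legendre solution $p_\mu$ expresses $c(\sigma)$ through $p_\mu(0)$, $p_\mu'(0)$ and two nonnegative integrals of $v$, and the endpoint signs $p_m'(0)\cdot p_{m+1}(0)<0$ (which hold exactly when $m$ is odd) force $c_m(0)\cdot c_m(1)<0$, whence $\bar\sigma_m$ exists by the intermediate value theorem and $\bar\mu_m\in(m,m+1)$ by monotonicity. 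The $m\to\infty$ asymptotics then hinge on quantitative two-sided bounds for $-p_\mu'(0)/p_\mu(0)$ in terms of $m(m+1-\mu)$, paired with a blow-up argument showing the rescaled $v$ converges to $\sin x$ on the strip; your boundary-layer heuristic is aligned in spirit, but the decisive quantitative input is again the Legendre analysis, not the matched-asymptotics picture.
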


\begin{figure}[h]
    \centering
    \subfigure[$m=3$]{
        \includegraphics[width=0.3\textwidth]{./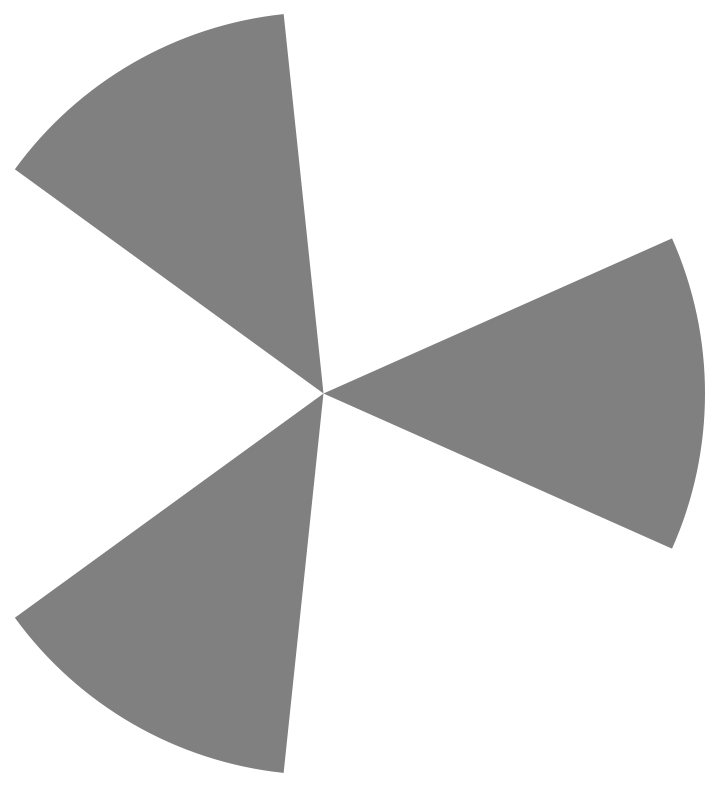}
    }
    \hfill
    \subfigure[$m=5$]{
        \includegraphics[width=0.3\textwidth]{./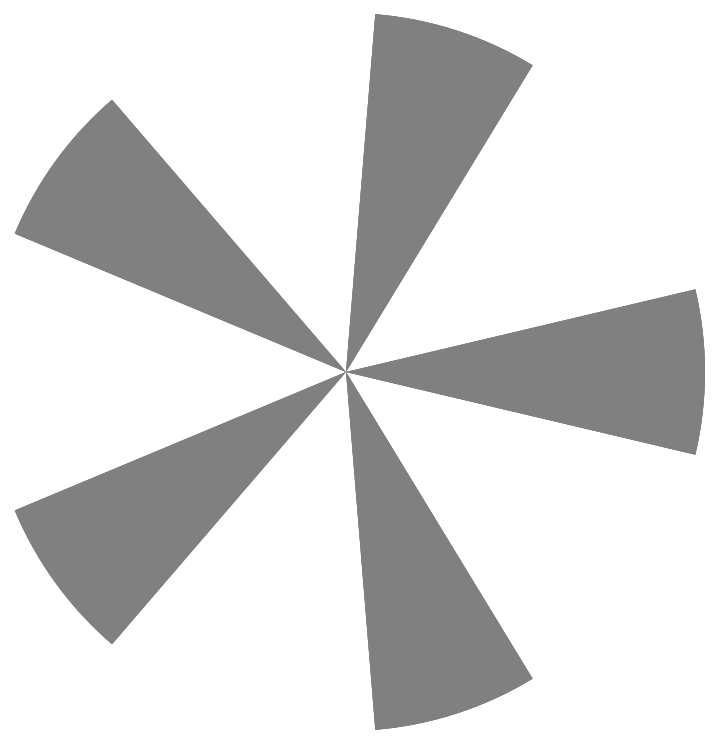}
    }
    \hfill
    \subfigure[$m=15$]{
        \includegraphics[width=0.3\textwidth]{./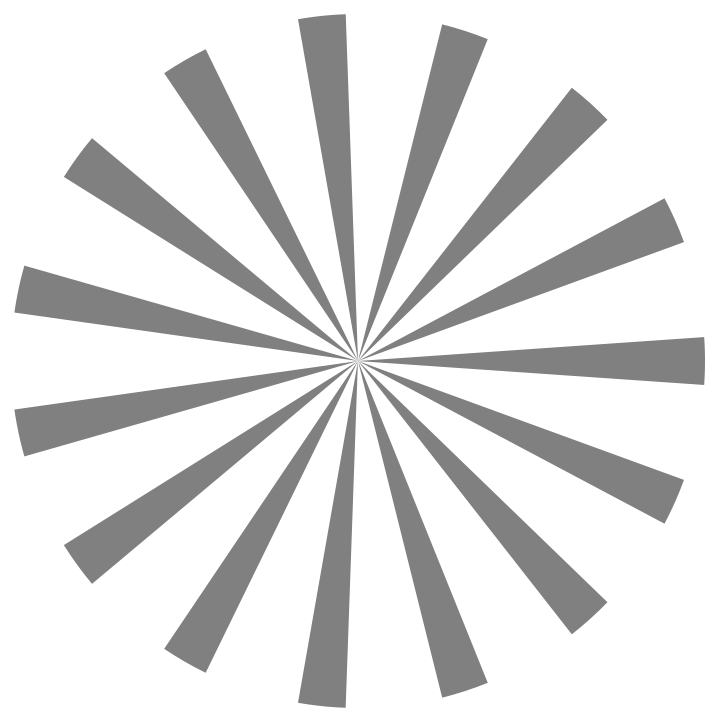}
    }
    \caption{The contact set $Z(u) = \{u =0 \}\cap\{ z=0\}$.}
    \label{fig:contactsets}
\end{figure}
\begin{remark}
Starting from a $\mu$-homogeneous solution in $n=3$, one can immediately construct a $\mu$-homogeneous solution for all $n\ge 3$.
\end{remark}

\begin{remark}
    Numerical simulations suggest
    \[
    \bar\sigma_3\in ({.39},{.42}{}),\qquad \bar\sigma_5\in ({.35},{.38}{}),\qquad \bar\sigma_{15}\in ({.31},{.34}{}),
    \]
    and
    \[
    \bar\mu_3 \in(3.52,3.59),\qquad \bar\mu_{5} \in (5.52,5.59),\qquad \bar \mu_{15} \in (15.60,15.67).
    \]
\end{remark}
We also prove a negative result in all dimensions, which shows that the restriction to odd values of $m$ is necessary.
\begin{theorem}\label{thm:gaps}
    Let $u\colon \R^n\to \R$ be a $\mu$-homogeneous solution of the thin obstacle problem, for some $n\ge1$. Then $\mu\notin \bigcup_{j\ge 0}(2j,2j+1).$
\end{theorem}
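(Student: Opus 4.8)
\emph{Proof plan.} The plan is to test the equation against a single, carefully chosen auxiliary harmonic function and read off a sign constraint that fails exactly on the intervals $(2j,2j+1)$. Work in the upper half‑ball $B_1^+:=B_1\cap\{x_n>0\}$, with spherical coordinates $x=r\,\omega$ and $x_n=r\sin\varphi$, $\varphi\in[-\tfrac\pi2,\tfrac\pi2]$, so $\{\varphi=0\}$ is the thin space and $\varphi=\tfrac\pi2$ is the positive $x_n$–semiaxis. Two preliminary reductions: the case $n=1$ is trivial ($u$ is affine off the origin, so $\mu\in\{0,1\}$); and if $u\equiv 0$ on $\{x_n=0\}$ then, by the even symmetry and Schwarz reflection, $u|_{\{x_n>0\}}$ extends to a homogeneous harmonic function on all of $\R^n$, forcing $\mu\in\N$. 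Hence, since in our range $\mu\notin\Z$, the trace $u(\,\cdot\,,0)$ is a nonnegative, not identically zero, $\mu$–homogeneous function on $\R^{n-1}$, so that $\int_{\{x_n=0\}\cap B_1}|x'|^{\mu-1}u\,d\H^{n-1}>0$.

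Let $W(x):=r^\mu h(\varphi)$ be the unique (up to scaling) $\mu$–homogeneous harmonic function on $\R^n\setminus\{0\}$ that is invariant under rotations fixing the $x_n$–axis and smooth across the positive $x_n$–semiaxis; thus $h$ solves the singular Sturm--Liouville problem $(\cos^{n-2}\varphi\,h')'+\mu(\mu+n-2)\cos^{n-2}\varphi\,h=0$ on $(0,\tfrac\pi2)$ with $h$ bounded at $\varphi=\tfrac\pi2$, and we normalise $h(0)=1$ (legitimate since $\mu$ is not an integer). On $\{x_n=0\}$ one has $W=|x'|^\mu$ and $\de_{x_n}W|_{x_n=0^+}=h'(0)\,|x'|^{\mu-1}$. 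Applying Green's identity to the pair $u,W$ on $B_1^+\setminus B_\eps^+$, the two spherical parts of the boundary integral vanish \emph{identically}, because $\de_r u=\mu u$ and $\de_r W=\mu W$ make the integrand $W\de_\nu u-u\de_\nu W$ equal to zero there; since $u$ is harmonic in $B_1^+$ and $C^{1}$ up to the flat boundary (indeed $C^{1,1/2}$, by the regularity theory for the thin obstacle problem) and all remaining terms converge as $\eps\to0$ (the flat integrand is $O(|x'|^{2\mu-1})$ near the origin), one is left with
\begin{equation*}
h'(0)\int_{\{x_n=0\}\cap B_1}|x'|^{\mu-1}\,u\,d\H^{n-1}
\;=\;\int_{\{x_n=0\}\cap B_1}|x'|^{\mu}\,u_n\,d\H^{n-1}.
\end{equation*}
By \eqref{eq:TOP} the right‑hand side is $\le 0$, while by the first paragraph the integral on the left is $>0$; therefore $h'(0)\le 0$.

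The one non‑soft input, and the real content, is that in fact $h'(0)>0$ whenever $\mu\in(2j,2j+1)$ — which contradicts the above and proves the theorem. Substituting $t=\sin\varphi$ turns the ODE into the Gegenbauer equation, so $h(\varphi)=C^{(n-2)/2}_\mu(\sin\varphi)/C^{(n-2)/2}_\mu(0)$; the classical identities $C^\beta_\nu(0)=\cos(\tfrac{\pi\nu}{2})\,\Gamma(\tfrac\nu2+\beta)/[\Gamma(\beta)\Gamma(\tfrac\nu2+1)]$ and $(C^\beta_\nu)'(0)=2\beta\,C^{\beta+1}_{\nu-1}(0)$ then give, for $n\ge2$,
\begin{equation*}
h'(0)\;=\;2\tan\!\Big(\tfrac{\pi\mu}{2}\Big)\,
\frac{\Gamma\!\big(\tfrac{\mu+1}{2}+\tfrac{n-2}{2}\big)\,\Gamma\!\big(\tfrac{\mu}{2}+1\big)}
{\Gamma\!\big(\tfrac{\mu+1}{2}\big)\,\Gamma\!\big(\tfrac{\mu}{2}+\tfrac{n-2}{2}\big)},
\end{equation*}
and the ratio of $\Gamma$–factors is strictly positive (all four arguments are positive for $\mu>0$), so $h'(0)$ has the sign of $\tan(\tfrac{\pi\mu}{2})$, which is positive precisely on $\bigcup_{j\ge0}(2j,2j+1)$. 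Equivalently, and without special functions: $\lambda\mapsto h'(0)$ is the Weyl--Herglotz $m$–function of that Sturm--Liouville problem, hence monotone between consecutive poles; its poles occur at the odd integer values of $\mu$ (where the bounded solution is a rotation‑invariant \emph{Dirichlet} eigenfunction of the hemisphere) and its zeros at the even integers (the \emph{Neumann} eigenfunctions), and checking that $h'(0)>0$ for small $\mu>0$ propagates the sign to every interval $(2j,2j+1)$. I expect this last sign computation to be the crux; everything preceding it is a soft consequence of homogeneity and \eqref{eq:TOP}.
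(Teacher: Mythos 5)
Your proof is correct and follows essentially the same strategy as the paper: both test $\Delta u=0$ (equivalently $L_\mu u=0$) against the axially symmetric $\mu$-homogeneous harmonic function $r^\mu p_\mu(\varphi)$, use the Pohozaev-type identity to eliminate the spherical boundary terms, and arrive at the scalar constraint $p_\mu'(0)\int u = p_\mu(0)\int u_n$ on the equator, which is incompatible with the sign conditions $u\ge0$, $u_n\le0$ whenever $p_\mu(0)$ and $p_\mu'(0)$ have the same sign. The only substantive difference is how the sign of $p_\mu'(0)/p_\mu(0)$ is determined: you invoke the classical Gegenbauer identities $C^\beta_\nu(0)=\cos(\tfrac{\pi\nu}{2})\Gamma(\tfrac\nu2+\beta)/[\Gamma(\beta)\Gamma(\tfrac\nu2+1)]$ and $(C^\beta_\nu)'(0)=2\beta\,C^{\beta+1}_{\nu-1}(0)$ (or, heuristically, the Herglotz property of the Weyl $m$-function), whereas the paper proves Proposition~\ref{prop:signs}(a) self-containedly by a Sturm-type argument counting the zeros and critical points of $p_\mu$ as $\mu$ increases. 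The paper explicitly acknowledges your special-function route as a valid alternative in the remark following Proposition~\ref{prop:signs}; its own argument is preferred mainly because parts (b) and (c) of that proposition, needed for the asymptotics in Section~\ref{sec:asymptotics}, require a quantitative analysis that a closed-form identity does not readily provide. Two minor points worth cleaning up: in the reduction for $u\equiv0$ on $\{x_n=0\}$ you should invoke \emph{odd} reflection (Schwarz reflection for vanishing Dirichlet data), not even symmetry, to extend $u|_{\{x_n>0\}}$ harmonically across the thin space; and it is worth flagging explicitly that the Gamma-ratio formula is to be read as a limit when $n=2$ (where $\beta=0$ and $\Gamma(\beta)$ is singular), though the cancellation in the quotient $h'(0)=(C^\beta_\mu)'(0)/C^\beta_\mu(0)$ does make it come out right, giving $h'(0)=\mu\tan(\tfrac{\pi\mu}{2})$.
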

\subsection{Comments}
Our main theorems answer some open questions about the set of possible frequencies in the thin obstacle, that is the set
$$ \mathcal{F}_{nD} := \{ \mu > 0 : \text{there is a nontrivial solution to \eqref{eq:TOPsphere}} \}. $$
The only known examples of homogeneous solutions are based on extensions of the explicit solutions in dimension $n=2$. For all $n\ge2,$ it holds
\begin{equation}\label{1}
\mathcal{F}_{2D} = \mathbb{N} \cup \left \{ 2k+ \tfrac 32 : k \in \mathbb N \right \} \, \subseteq\, \mathcal{F}_{nD}.
\end{equation}
Athanasopoulous--Caffarelli--Salsa in \cite{ACS} classified the lowest frequencies and showed that for all $n\ge2,$
$$\mathcal{F}_{nD} \cap (0,2)= \{1,\tfrac 32 \},$$
which in turn gives the optimal regularity of solutions. 

By the work Garofalo--Petrosyan \cite{GP}, Colombo--Spolaor--Velichkov \cite{CSV}, Figalli--Ros-Oton--Serra \cite{FRS}, Savin--Yu \cite{SY, SY2}, when $\mu$ is an integer, then homogeneous solutions are classified and 
\begin{equation}\label{eq:intro1}
\mathcal{F}_{nD}\cap(m-c_{m,n},m+c_{m,n})=\{m\}\quad\text{for all}\quad m\in \N,
\end{equation} 
for suitable numbers $c_{m,n}>0$.


A dimension reduction argument based on Almgren's frequency, shows that the set $\mathcal{F}_{nD}\setminus\mathcal{F}_{2D},$ has dimension at most $n-3$ (see \cite{FoSpa}).
Beside this, very little was known for $\mathcal{F}_{nD}\setminus\mathcal{F}_{2D},$ not even if it was empty or not. 
This is the main novelty of \Cref{thm:main}: 
$$\mathcal{F}_{3D}\setminus\mathcal{F}_{2D}\neq\emptyset.$$

Additionally, \Cref{thm:main} shows that the numbers $c_{m,n}$ appearing in \eqref{eq:intro1} are necessarily infinitesimal as $m\to+\infty$ with $n$ fixed.

We also remark that, in all the known examples of homogeneous solutions, the boundary of the interior of the contact set was either empty or an $(n-2)$-dimensional linear space, in particular it was a flat cone.
\Cref{thm:main} shows that this set is not a flat cone in general. 

Beside its own interest in showing that the set $\mathcal{F}_{nD},$ has large gaps, \Cref{thm:gaps} has an important consequence for the free boundary regularity in the {\it thick} Obstacle Problem, by the work of Figalli--Serra \cite{FS} (see also Savin--Yu \cite{SY2,SY1}). 

Specifically, \Cref{thm:gaps} implies that the top-dimensional stratum of the singular set (the set $\Sigma_{n-1}$, in the notation of \cite{FS}) is locally contained in a $C^{1,1}$ hypersurface, rather than a $C^{1,\alpha_\circ}$ one. 
In other words, there are no ``anomalous'' points in $\Sigma_{n-1}$, see Remark 1.2 (2a) in \cite{FS} for more details.

\subsection{Connection with 2-valued harmonic functions and $\Z/2\Z$ eigenfunctions}

Our examples also work for unsigned versions of the thin obstacle problem.

Denote by $\mathcal{A}_2(\R)$ the space of unordered pairs of real numbers. If $u\colon\R^3\to\R$ is one of our examples constructed in \Cref{thm:main} for some $m$, then
\[
U\colon x\mapsto \{+u(x)\, ,\,-u(x)\}\ \in \ \mathcal{A}_2(\R),
\]
is a $\bar\mu_m$-homogeneous, 2-valued, $C^{1,1/2}$, stationary harmonic function (see Krummel--Wickramasekera \cite{KW}). 
These functions arise naturally as limits of stable minimal hypersurfaces, see Wickramasekera \cite{W}. 
We remark that $U$ is not a Dir-minimizer in the sense of Almgren, having a too big singular set, see De Lellis--Spadaro \cite{DLS}.

In a related framework, if we set
\[
\S^2\cap \Gamma(u) =: \{z_1,z_2,\ldots,z_{2m}\},
\]
then such a $U$ can be interpreted as an homogeneous example of an harmonic section of an appropriate real line bundle over $\S^2\setminus\{z_1,z_2,\ldots,z_{2m}\},$ which has nontrivial holonomy around each $z_i.$ See Taubes--Wu \cite{TW,TW2}, Donaldson \cite{D} and Haydys--Mazzeo--Takahashi \cite{HMT} for some recent developments.

\subsection{Structure of the paper}
In \Cref{sec:existence} we construct the solutions of \Cref{thm:main} and prove \Cref{thm:gaps}. The construction is based on the study of eigenfunctions in spherical domains inside the region $0\le \theta\le \pi/m,$ and it is short.        

The proof of the asymptotics of \Cref{thm:main} is carried out in \Cref{sec:asymptotics}. This requires a more refined analysis, but it is necessary in order to justify rigorously why the new frequencies are different from the 2D frequencies, at least for large values of $m$. We remark that this fact can be
inferred numerically already in the first few cases when $m= 3$, $m= 5$ or $m= 7.$

In \Cref{sec:variant} we present a natural variation of our construction, which allows us to construct even more solutions.

In \Cref{sec:legendre} we give a self-contained proof of some auxiliary results concerning Legendre functions.

\subsection{Acknowledgments} The first author gratefully acknowledges support from the Giorgio and Elena Petronio Fellowship while working on this project and thanks Camillo De Lellis for pointing out to him that the examples constructed here are not Dir-minimizers.

\section{Proof of \Cref{thm:main}}\label{sec:existence}
\subsection{Notation and Legendre functions}

The spherical coordinates in 3D are
\begin{equation*}
    x = r \cos \theta\cos\varphi,\quad y = r\sin\theta\cos\varphi,\quad z = r\sin\varphi,
\end{equation*}
with $\theta\in \R/2\pi\Z$ and $\varphi\in[-\pi/2,\pi/2].$ The thin space is $\{z=0\}=\{\varphi=0\}$.

We denote by $\S\subset\R^3$ the unit round sphere, by $N$ the north pole and $S$ the south pole and with $\S_+$ the open upper hemisphere $\S\cap\{z>0\}$. 

Let $\triangle_\S u$ be the spherical Laplacian, which in terms of $(\theta,\varphi)$ is expressed as
$$\triangle_\S u:= u_{\varphi \varphi} - \tan \varphi \, u_\varphi + (\cos \varphi)^{-2} \, u_{\theta \theta}.$$ 

We denote with $\rho$ the distance from the $z$ axis
\begin{equation*}
    \rho:=\sqrt{x^2+y^2}=\cos\varphi.
\end{equation*}

In the sphere $\S^{n-1}\subset\R^n$, the operator $L_\mu =\lap_{\S^{n-1}} +\mu(\mu+n-2)$ reads as 
\[
L_\mu w = \rho^{-2} \lap_{\S^{n-2}} w +  w_{\varphi \varphi} -(n-2)\tan \varphi\, w_\varphi +\mu(\mu+n-2) w.
\] 
We often write, for readability, 
\[
\lambda=\lambda_\mu=\mu(\mu+n-2). 
\]

For every $\mu \ge0 $ denote by $p_\mu\colon \S^{n-1}\setminus\{S\}\to \R$ the unique solution to
\begin{equation}
L_\mu p_\mu =0, \qquad p_\mu(N)=1
\end{equation}
which is also invariant by rotations around the $x_n$-axis. The function $p_\mu$ depends only on the angle $\varphi$, and it is uniquely determined by the final value problem
\begin{align}\label{2}
\begin{cases}
    p_\mu'' - (n-2) \tan \varphi \cdot p_\mu' + \mu(\mu+n-2)  p_\mu =0\quad \text {in}\quad[0,\tfrac\pi2],\\
    p_\mu(\tfrac\pi2)=1, \quad \quad p_\mu'(\tfrac\pi2)=0.
\end{cases}
\end{align}

If $\mu$ is an even (odd) integer, $p_\mu$ is a $\mu$-homogeneous harmonic polynomial even (odd) in $x_n$ and invariant by those rotations that fix $\{x_n=0\}$.

\begin{proposition}[Legendre functions]\label{prop:signs}
\,
\begin{itemize}
\item[(a)] The signs of $p_\mu(0)$ and $p_\mu'(0)$ are given by $\cos(\mu \frac \pi 2)$ and $\sin(\mu \frac \pi 2),$ respectively.
\item[(b)] If $m$ is odd and $\mu\in(m,m+1],$ then \begin{equation}\label{eq:6}
-p_\mu'(0)/p_\mu(0) \ge c m (m+1-\mu).
\end{equation}
\item[(c)] If $m$ is odd and $\mu\in[m+1-\delta,m+1],$ then
\begin{equation}\label{eq:7}
0\le-p_\mu'(0)/p_\mu(0)\le Cm (m+1-\mu).
\end{equation}
\end{itemize}
The positive constants $c$, $C$ and $\delta$ depend only on $n$.
\end{proposition}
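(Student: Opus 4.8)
The plan is to convert the final-value problem \eqref{2} into a classical hypergeometric equation, read off $p_\mu(0)$ and $p_\mu'(0)$ from a summation formula for ${}_2F_1$ at the point $\tfrac12$, and then reduce everything to elementary estimates on Euler $\Gamma$-functions. Concretely, I would first substitute $t=\sin\varphi$ in \eqref{2}; a direct computation turns it into the Gegenbauer-type equation $(1-t^2)y''-(n-1)\,t\,y'+\mu(\mu+n-2)\,y=0$ on $t\in[0,1]$, whose unique solution bounded near the regular singular point $t=1$ is the one picked out by the data $p_\mu(\tfrac\pi2)=1$, $p_\mu'(\tfrac\pi2)=0$ (the other branch behaves like $\log(1-t)$ when $n=3$ and blows up when $n\ge4$). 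Setting $w=\tfrac{1-t}{2}$ identifies this with the hypergeometric equation of parameters $a=-\mu$, $b=\mu+n-2$, $c=\tfrac{n-1}{2}$, so that
$$p_\mu(t)={}_2F_1\!\Big(-\mu,\ \mu+n-2;\ \tfrac{n-1}{2};\ \tfrac{1-t}{2}\Big),$$
the normalization $p_\mu(1)=1$ being automatic, and differentiating, $p_\mu'(t)=\tfrac{\mu(\mu+n-2)}{n-1}\,{}_2F_1\big(1-\mu,\mu+n-1;\tfrac{n+1}{2};\tfrac{1-t}{2}\big)$.

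Next I would evaluate at $t=0$, i.e.\ at $w=\tfrac12$. In both hypergeometric functions the parameters satisfy $c=\tfrac{a+b+1}{2}$, so Gauss's second summation theorem applies — this is exactly the kind of self-contained special-function input collected in \Cref{sec:legendre}, where for $n=3$ it reduces to the classical values of $P_\mu(0)$ and $P_\mu'(0)$ — and it gives the closed forms
$$p_\mu(0)=\frac{\sqrt\pi\,\Gamma(\tfrac{n-1}{2})}{\Gamma(\tfrac{1-\mu}{2})\,\Gamma(\tfrac{\mu+n-1}{2})},\qquad
p_\mu'(0)=\frac{\mu(\mu+n-2)}{n-1}\cdot\frac{\sqrt\pi\,\Gamma(\tfrac{n+1}{2})}{\Gamma(1-\tfrac\mu2)\,\Gamma(\tfrac{\mu+n}{2})}.$$
Statement (a) then falls out of Euler's reflection formula: $\Gamma(\tfrac{1-\mu}{2})^{-1}=\pi^{-1}\cos(\tfrac{\pi\mu}{2})\,\Gamma(\tfrac{1+\mu}{2})$ and $\Gamma(1-\tfrac\mu2)^{-1}=\pi^{-1}\sin(\tfrac{\pi\mu}{2})\,\Gamma(\tfrac\mu2)$, while every remaining $\Gamma$-factor is strictly positive for $\mu>0$.

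For (b) and (c) I would form the quotient, use $\Gamma(\tfrac{n+1}{2})=\tfrac{n-1}{2}\Gamma(\tfrac{n-1}{2})$, write $\mu=m+1-s$ with $s:=m+1-\mu\in[0,1)$ and $k:=\tfrac{m-1}{2}$ (a nonnegative integer since $m$ is odd), and apply the reflection formula to the two resonant arguments $\tfrac{1-\mu}{2}=\tfrac{s-1}{2}-k$ and $\tfrac{2-\mu}{2}=\tfrac{s}{2}-k$; since $\tfrac{2-\mu}{2}$ hits the pole $-k$ exactly at the even integer $\mu=m+1$, that is where $p_\mu'(0)$ vanishes. After the sign bookkeeping one obtains the manifestly nonnegative identity
$$-\frac{p_\mu'(0)}{p_\mu(0)}=\frac{\mu(\mu+n-2)}{2}\cdot\tan\!\Big(\frac{\pi s}{2}\Big)\cdot\frac{\Gamma(k+1-\tfrac s2)}{\Gamma(k+\tfrac32-\tfrac s2)}\cdot\frac{\Gamma(\tfrac{\mu+n-1}{2})}{\Gamma(\tfrac{\mu+n}{2})}\ \ge\ 0
\qquad\text{for }\mu\in(m,m+1],$$
which already yields the sign assertions in (b) and (c). It then remains to bound the four factors for $\mu\in[m,m+1]$: the polynomial prefactor is comparable to $m^2$; by Gautschi's inequality (equivalently, log-convexity of $\Gamma$) each of the two ratios of the shape $\Gamma(x)/\Gamma(x+\tfrac12)$ is comparable to $m^{-1/2}$, with explicit two-sided constants depending only on $n$; and $\tfrac{\pi s}{2}\le\tan(\tfrac{\pi s}{2})$ on $[0,1)$, while $\tan(\tfrac{\pi s}{2})\le C s$ on $[0,\delta]$ for any fixed $\delta\in(0,1)$. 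Multiplying these bounds gives $-p_\mu'(0)/p_\mu(0)\ge c\,m\,(m+1-\mu)$ on all of $(m,m+1]$, which is \eqref{eq:6}, and $-p_\mu'(0)/p_\mu(0)\le C\,m\,(m+1-\mu)$ on $[m+1-\delta,m+1]$, which is \eqref{eq:7}.

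The main obstacle I anticipate is the honest, self-contained derivation of the two ${}_2F_1$-evaluations at $\tfrac12$ (that is, Gauss's second summation theorem in the form needed here, uniformly in $n$); once those closed forms are available the rest is careful but routine $\Gamma$-function estimation, the only delicate points being to pin down the exact powers of $m$ — hence the use of two-sided Gautschi-type bounds rather than mere asymptotics — and to track signs correctly through the reflection formula.
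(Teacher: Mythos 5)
Your proof is correct, but it takes a genuinely different route from the paper's. You reduce everything to explicit special-function identities: a change of variables turns the ODE for $p_\mu$ into the hypergeometric equation, Gauss's second summation theorem gives closed-form Gamma-function expressions for $p_\mu(0)$ and $p_\mu'(0)$, part (a) drops out of the reflection formula, and (b), (c) follow from Gautschi-type two-sided bounds on $\Gamma(x)/\Gamma(x+\tfrac12)$ together with $\pi s/2\le\tan(\tfrac{\pi s}{2})$ on $[0,1)$ and $\tan(\tfrac{\pi s}{2})\le Cs$ near $0$. Your computations all check: the Gegenbauer form $(1-t^2)y''-(n-1)ty'+\lambda y=0$, the parameter match $c=\tfrac{a+b+1}{2}$ needed for Gauss's second theorem, and the final formula $-p_\mu'(0)/p_\mu(0)=\tfrac{\mu(\mu+n-2)}{2}\tan(\tfrac{\pi s}{2})\,\tfrac{\Gamma(\mu/2)}{\Gamma((\mu+1)/2)}\,\tfrac{\Gamma((\mu+n-1)/2)}{\Gamma((\mu+n)/2)}$, which is indeed nonnegative on $(m,m+1]$ for $m$ odd and has the right $m\cdot s$ scaling. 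The paper instead avoids Gauss's summation theorem entirely: part (a) is done by counting and interlacing of zeros of $p_\mu$ as $\mu$ increases, and (b), (c) are obtained by tracking the first zero $\zeta(\mu)$ of $p_\mu$ via $\zeta'(\mu)=\tfrac{(2\mu+n-2)}{\lambda_\mu}\int_{\{\varphi\ge\zeta\}}|\nabla p_\mu|^2/p_\mu'(\zeta)^2$, bounding that ratio two-sidedly by energy methods and the $C^1$ approximation of $p_\mu$ by planar waves (\Cref{lem:logbound}), then converting to the boundary data by smooth dependence of the ODE flow. Your approach buys cleaner, exact constants and an explicit formula for the quotient; the paper's approach buys self-containedness (no hypergeometric input, matching the promise made after \Cref{prop:signs}) at the cost of more delicate comparison and energy arguments. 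The paper in fact flags your route explicitly as an alternative via the classical values of $P^0_\mu(0)$, $(P^0_\mu)'(0)$ in Abramowitz--Stegun, so the two proofs were known to coexist.
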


Since it requires some computations, we postpone the proof of  \Cref{prop:signs}, which will be given in \Cref{sec:legendre} below.

We remark that in the case $n=3$, $p_\mu(\varphi) = P^0_\mu(\sin \varphi)$, where $P^\kappa_\nu(z)$ is the so-called associated Legendre function. Manipulating the explicit formulas available for such special functions, one could give an alternative proof of \Cref{prop:signs} (see equations 8.6.1 and 8.6.3 in \cite{stegun}). 

\begin{figure}[h]
    \centering
    \subfigure[$\mu=7$ in blue, $\mu=8$ in red]{
        \includegraphics[width=0.45\textwidth]{./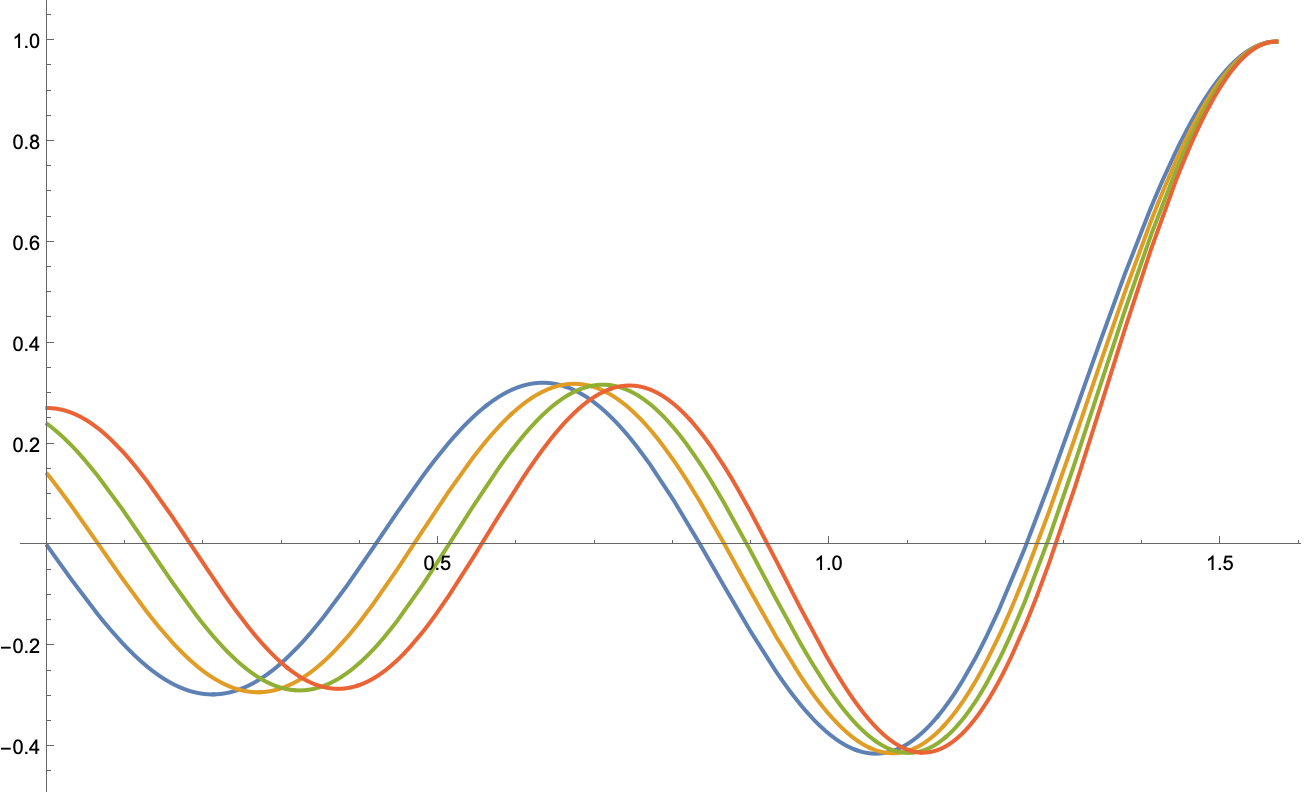}
    }
    \hfill
    \subfigure[$\mu=12$ in blue, $\mu=13$ in red]{
        \includegraphics[width=0.45\textwidth]{./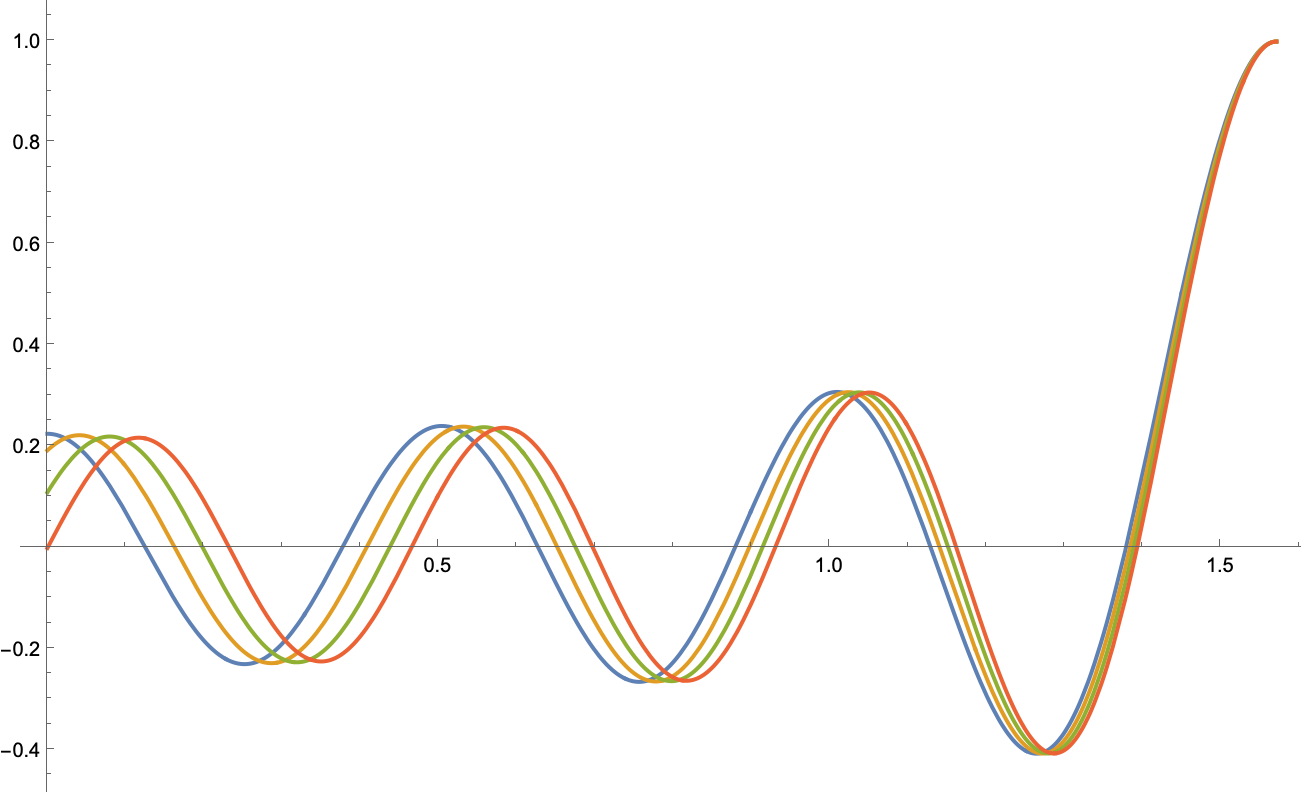}
    }
    \caption{Plots of $p_\mu(\varphi)$ as $\mu$ ranges between two consecutive integers and $\varphi\in[0,\tfrac{\pi}{2}]$.}
    \label{fig:pmu}
\end{figure}

\subsection{Construction of $\bm {v,}$ $\bm{h,}$ and $\bm u$}\label{subsec:vhu}
For each integer $m \ge 1$  and $\sigma\in[0,1]$ consider the spherical closed cone in $\R^3$
\begin{equation}\label{eq:defSigma}
\Sigma=\Sigma_{m,\sigma}:=\big\{\dist(\theta,\tfrac{2\pi}{m}\Z )\le \sigma\pi/m \ ,\  z=0\big\}
\end{equation}
and the spherical domain
\begin{equation*}
    \Omega=\Omega_{m,\sigma}:=\{-\tfrac\pi2<\varphi<\tfrac\pi2\ ,\ 0<\theta<\pi/m\} \setminus \Sigma_{m,\sigma} \quad \subset\quad\S.
\end{equation*}
which corresponds to the region $\theta \in (0, \pi/m)$ from which we remove a left segment of length $\sigma \pi/m$ from the equator.

We denote by $v = v_{m,\sigma} \ge 0$ the first Dirichlet eigenfunction of $-\triangle_\S$ in the spherical domain $\Omega$, which is also even with respect to $\varphi$.

Denote by $\mu=\mu_{m,\sigma}>0$ the respective value which makes the $\mu$-homogeneous extension of $v$ harmonic in $\R^3$.

Notice that $\mu$ is uniquely determined by the pair $(m,\sigma)$ and it is a continuous and strictly increasing function of $\sigma\in[0,1]$. 

We construct $u$ from $v$ in requiring that
\[
\p_\theta u = v.
\]
That is to say
$$u(\theta,\varphi):= \int_0^\theta v(s,\varphi) ds + h(\varphi)\quad \text{for}\quad (\theta,\varphi) \in [0, \pi/m]\times [0,\pi/2),$$ 
and then extend $u$ by even reflections (both in $\theta$ and $\varphi$) to the whole unit sphere. 
Using that $L_\mu v =0$, we find 
$$L_\mu u= L_\mu h + \rho^{-2} \,v_\theta(0,\varphi),$$
so we choose $h\colon [0, \frac \pi 2) \to \R$ as the solution of the initial value problem:
\begin{equation}\label{eq:ODE}
\begin{cases}
    L_\mu h + (\cos \varphi)^{-2} \, v_\theta(\varphi,0)=0, \\
    h(0) =0,\\
    h'(0) = - \int_0^{\pi/m} v_\varphi(s,0^+) ds. 
\end{cases}
\end{equation} 
We used the notation $h'=h_\varphi$. Since $u$ solves $L_\mu u=0$ in $\S_+\setminus\{N\}$ and $h$ may blow-up at most logarithmically around $N$ (see \Cref{lem:logbound}), we must have
\begin{equation*}
    L_\mu u = c(\sigma) \delta_N \quad \quad \text{in} \quad \S_+ = \S \cap \{z>0\},
\end{equation*}
where $\delta_N$ denotes the Dirac-delta measure at the north pole $N$, 
and $c(\sigma)$ is a constant that depends on the eigenfunction $v$. 

The boundary conditions for $h$ give that, for all  $\theta\in(0,\pi/m)$,
\begin{equation}\label{eq:uequator}
           u(\theta,0)=  \int_{\sigma\pi/m}^{\max\{\theta,\sigma\pi/m\}} v(s,0)\, ds,\quad u_\varphi (\theta,0+) = - \int_{\min\{\theta,\sigma\pi/m\}}^{\sigma\pi/m}  v_\varphi(s,0+)\,ds.
\end{equation}
We used the notation $v_\varphi(s,0+) = \lim_{t\to 0^+}v_\varphi(s,t)$.
Since $v\ge 0$ we immediately find
\begin{itemize}
    \item $u = 0$ and $u_\varphi\le0$ on the segment $[0,\sigma\pi/m]\times \{0\}$,
    \item $u \ge 0$ and $u_\varphi =0$ on the remaining part of the equator $[\sigma\pi/m,\pi/m]\times\{0\},$
    \item $u_\theta =0$ on the boundaries $\theta=0$ and $\theta=\pi/m$.
\end{itemize}
This proves that $u$ solves the Thin Obstacle Problem \eqref{eq:TOPsphere} if and only if $c(\sigma)=0$. 

We express the number $c(\sigma)$ in terms of $v$ and the Legendre functions $p_\mu$. Integrating by parts $p_\mu \cdot L_\mu u$ the upper sphere we find
\begin{align}\label{eq:intbyparts}
    \frac{c(\sigma)}{2 m}&=\frac{1}{2m}\int_{\S_+} p_\mu L_\mu u = - p_\mu(0)\int_0^{\pi/m}u_\varphi (t,0+)\,dt + p'_\mu(0)\int_0^{\pi/m}u(t,0)\, dt.
\end{align}
So, using \eqref{eq:uequator} to express $u$ and $u_\varphi$ in terms of $v$ and $v_\varphi$ on the equator,
\begin{equation}\label{eq:quantity}
\frac{c(\sigma)}{2 m}=p_\mu(0) \int_0^{\pi/m} v_\varphi(s,0+) s\,ds + p_\mu'(0) \int_{0}^{\pi/m} v(s,0) (\tfrac{\pi}{m}-s )\, ds.
\end{equation}

\begin{figure}[h]
    \centering
    \subfigure[$\sigma<\bar\sigma_3$, positive pole at $\varphi=\tfrac\pi2.$]{
        \includegraphics[width=0.45\textwidth]{./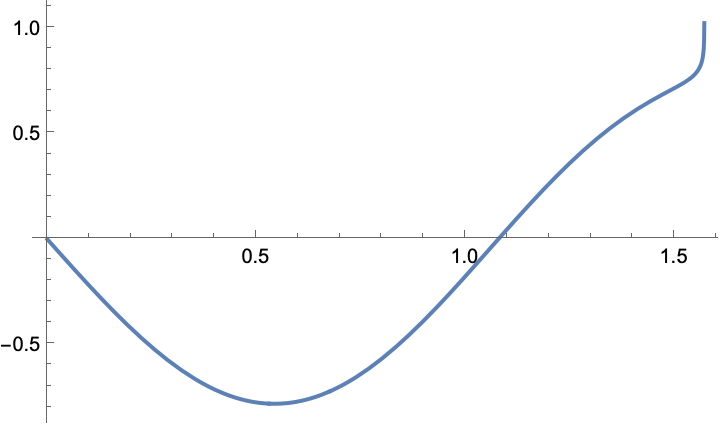}
    }
    \hfill
    \subfigure[$\sigma > \bar\sigma_3$, negative pole at $\varphi=\tfrac\pi2.$]{
        \includegraphics[width=0.45\textwidth]{./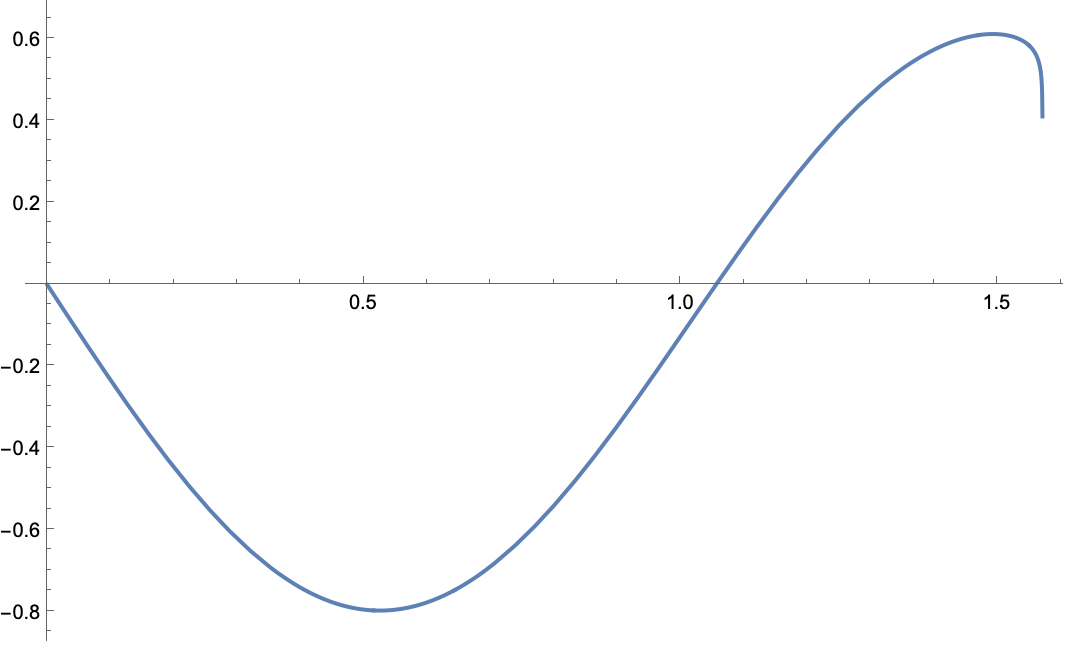}
    }
    \caption{Numerical plots of $h$ for $m=3$ and different values of $\sigma$.}
    \label{fig:h}
\end{figure}

\subsection{ Existence of solutions }\label{subsec:existence}
Fix any odd $m>0$. 
If we pick the $v_{m,\sigma}$ to be positive and with maximum value $1$, the map $\sigma\mapsto v_{m,\sigma}$ becomes continuous and so does the map $\sigma\mapsto c(\sigma)$. 
Thus, in order to prove the existence of $\bar\sigma_m\in(0,1)$ and $\bar\mu_m,$ it is enough to show that $c_m(0)\cdot c_m(1)<0$. 

Now the functions $v_{m,0},v_{m,1}$ and their eigenvalues are completely explicit. For $\sigma=0$ we have
\begin{equation*}
    v = \rho^m \sin(m\theta)=\im(x+iy)^m>0,\qquad \mu=m,\qquad  v_\varphi(\theta,0)=0,
\end{equation*}
and so
\begin{equation*}
     c_m(0)=p_m'(0)\int_0^{\pi/m}\sin(ms)(\tfrac\pi m-s)\, ds.
\end{equation*}
For $\sigma=1$ we have
\begin{equation*}
    v = z\rho^m \sin(m\theta)=z\im(x+iy)^m>0,\qquad \mu=m+1,\qquad  v(\theta,0)=0,
\end{equation*}
and so
\begin{equation*}
     c_m(1)=p_{m+1}(0)\int_0^{\pi/m}\sin(ms)\,s\, ds.
\end{equation*}
Then it is clear that $c_m(0)\cdot c_m(1)$ has the same sign as $p_m'(0)\cdot p_{m+1}(0).$ By \Cref{prop:signs} when $m$ is odd we have that (up to multiplications by positive numbers)
\[
p_m(0)=0,\qquad p_m'(0)=\pm1,
\]
and 
\[
p_{m+1}(0)=\mp 1,\qquad p_{m+1}'(0)=0,
\]
so $c_m(0)\cdot c_m(1)<0$.
Finally, $\bar\mu_m\in (m,m+1)$ by monotonicity of $\sigma\mapsto \mu_{m,\sigma}$.

\begin{figure}[h]
    \centering
    \subfigure{
        \includegraphics[width=0.45\textwidth]{./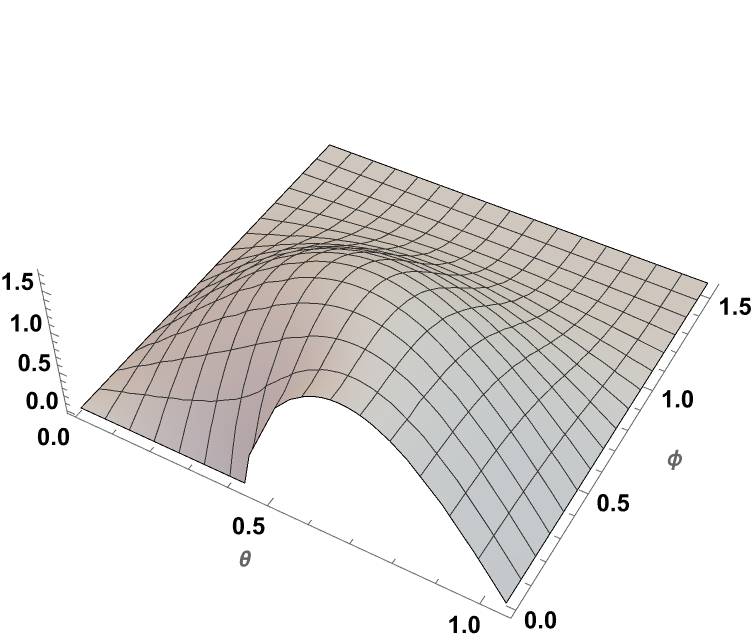}
    }
    \hfill
    \subfigure{
        \includegraphics[width=0.45\textwidth]{./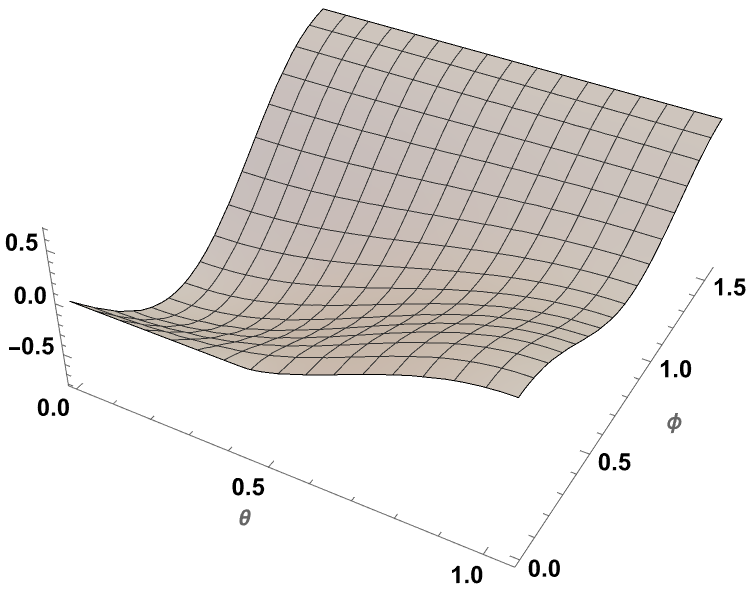}
    }
    \caption{Plots of $v$ (left) and $u$ (right) for $m=3$ and $\sigma=.42$}
    \label{fig:uv}
\end{figure}

It is natural to inspect this computation in the case of even $m$.
\subsection{Non-existence of solutions --- Proof of \Cref{thm:gaps}}

By \Cref{prop:signs}, when $m$ is even we have, up to positive multiplicative constants,
\[
p_m(0)=\pm 1,\qquad p_m'(0)=0,
\]
and 
\[
p_{m+1}(0)=0,\qquad p_{m+1}'(0)=\pm 1,
\] 
and 
\begin{equation}\label{eq:i}
p_\mu(0) \cdot p_\mu'(0) >0\quad\text{for all}\quad\mu\in(m,m+1).
\end{equation}
Comparing this with \eqref{eq:intbyparts} one finds that our examples cannot be constructed for even $m$'s. Actually, much more is true, since computation \eqref{eq:intbyparts} only used that $u$ was an homogeneous solution. This leads to \Cref{thm:gaps}.

\begin{proof}[Proof of \Cref{thm:gaps}]
Assume that $u$ is a $\mu$-homogeneous solution of \eqref{eq:TOPsphere} in $\S^{n-1}$ and that $\mu\in(2k,2k+1)$ for some integer $k$. We perform the same computation that gave \eqref{eq:intbyparts}:
that is we integrate by parts $p_\mu \cdot L_\mu u =0$ on $\S_+^{n-1}$. We find
\begin{equation}\label{3}
 p'_{\mu}(0) \int_{\S^n \cap \{x_n=0\} } u -p_\mu(0) \int_{\S^n \cap \{x_n=0\} }u_{n} =0.    
\end{equation} 
By \eqref{eq:i} and the sign conditions on $u$ and $u_n$, we must have that both $u$ and $u_n$ vanish identically on the equator, which implies that $u$ is zero.
\end{proof}
%
%
%
%

\section{Asymptotics for ${m\to+\infty}$}\label{sec:asymptotics}
We turn to the proof that $\bar\mu_m-m\to1$ and $\bar\sigma_m\to 0$ as $m\to+\infty$ along odd numbers. 

In order not to deal with $\pm$ symbols, we assume in the rest of the section that $m$ is congruent to $3$ modulo $4,$ so that
\[
p_\mu(0)>0,\quad p'_\mu(0)<0\quad\text{for all}\quad\mu\in(m,m+1).
\]
The other case is completely analogous.

For each $m$ and $\sigma\in[0,1]$ we take $v=v_{m,\sigma}$ as in the previous section and rescale
\[
\tilde v (x,y) = v(x/m,y/m),\qquad \tilde p_\mu(y):=p_\mu(y/m),
\]
with $x\in[0,\pi],\ y\in[0,m\pi/2]$. 
Multiplying $v$ by a positive constant, we may also assume that
\[
\int_0^\pi \tilde v(x,1)\sin x\, {dx} =\frac\pi2.
\]

Now equation \eqref{eq:quantity} gives that $c(\sigma)=0$ if and only if \begin{equation}\label{eq:c=0}
\tilde p_\mu(0)\int_0^{\pi} \tilde v_y(x,0+) x\,dx +{\tilde p'_{ \mu}(0)} \int_{0}^{\pi} \tilde v(x,0) ({\pi}-x )\, dx=0.
\end{equation}

We show that, if $m<\mu\le m+ 1-\eta$ while $m\to+\infty,$ then
\begin{equation}\label{eq:bw1}
\tilde v\to  \sin{x} \qquad  \text{and}\qquad\liminf_{m}\frac{-\tilde p_\mu'(0)}{\tilde p_\mu(0)}\ge c_\eta,
\end{equation}
for some small $c_\eta>0$.
This is incompatible with \eqref{eq:c=0}, proving $\bar \mu_m-m\to 1^-$.

Similarly, if $\sigma\ge\eta>0$ and $\mu-m\to1,$ while $m\to+\infty$, then
\begin{equation}\label{eq:bw2}
\int_0^\pi \tilde v_y(x,0^+)(\pi -x)\ge c\eta^2 \qquad  \text{and}\qquad \limsup_{m}\frac{-\tilde p_\mu'(0)}{\tilde p_\mu(0)}=0, 
\end{equation}
which is again incompatible with \eqref{eq:c=0}, proving $\bar \sigma_m\to 0$.

In the rest of this section, we prove \eqref{eq:bw1} and \eqref{eq:bw2}.  
\begin{itemize}
\item The claimed asyptotics for $-{\tilde p_\mu'(0)}/{\tilde p_\mu(0)}$ are direct consequences of items (b) and (c) of \Cref{prop:signs}.
\item Using a small multiple of $y \sin{x} $ as lower barrier for $\tilde v$, one finds immediately that for $m$ large and $c$ universal:
\[
\int_0^\pi \tilde v_y(x,0^+) (\pi -x)\ge c\int_0^\eta  (\pi-x)\, \sin x \,dx\ge c \eta^2.
\]
Indeed, if we express $L_\mu$ in the $(x,y)$ coordinates we find 
\begin{align}\label{eq:Ltilde}
\tilde L_\mu  &= \frac{1}{\cos^2(y/m)}\p_{xx} +\p_{yy} -\frac{\tan(y/m)}{m} \p_y +\frac{\mu(\mu+1)}{m^2},
\end{align}
thus for $(x,y)\in [0,\pi]\times[0,1]$ we have
\begin{align*}
	\tilde L_\mu (y\sin x) & = \Big(\frac{\mu(\mu+1)}{m^2}  -\cos(y/m)^{-2} -\frac{1}{m^2}\frac{\tan(y/m)}{y/m}\Big) y \sin{x} \\
	&\ge \big(1+\tfrac1m -1 - O(1/m^2)\big)y \sin{x} \ge0,
\end{align*}
at least for for $m$ large.
\item If we known that $\tilde v\to  \sin{x} $ uniformly in $Q:=[0,\pi]\times[0,1]$, then 
\[
\int_0^\pi \tilde v_y(x,0^+)x\, dx \le \pi\int_0^\pi \tilde v_y(x,0^+)\, dx \to 0.
\]
Notice that by elliptic estimates $\tilde v\to  \sin{x} $ in $C^1_\loc([0,\pi]\times (0,1])$ (the coefficients of $\tilde L_\mu$ have uniform bounds in $m$).
Using the divergence theorem, we can express $\int_0^\pi \tilde v_y(x,0^+)\, dx $ as sum of
\[
\int_0^1 \tilde v_x(0,y)\cos(y/m)\, dy, \quad \int_0^1 \tilde v_x(\pi,y)\cos(y/m)\, dy,\quad  \frac{\lambda_\mu}{m^2}\int_Q \tilde v \cos(y/m).\]
The conclusion follows by the dominated convergence theorem. Indeed a large multiple of $\cos(y) \sin{x} $ is an upper barrier for $\tilde v$ in $Q$, so for some universal  constant $C,$ uniform in $m,$ on the lateral sides we have 
\[
|\tilde v_x(0,y)| + |\tilde v_x(\pi,y)|\le C\quad \text{for all}\quad y\in[0,1].
\]
\end{itemize} 

So the proof of \eqref{eq:bw1} and \eqref{eq:bw2} is reduced to the proof of the following Proposition.

\begin{proposition}[Blow-up]\label{prop:blowup} Take any sequence $\sigma_m\in(0,1)$ and construct the relative $\mu_m,$ $v_m$ and $\tilde v_m$. Assume that
\[
\limsup_m \, (\mu_m-m)<1.
\]
Then $\tilde v_m\to  \sin{x} $ uniformly on compact sets of $[0,\pi]\times[0,+\infty)$.
\end{proposition}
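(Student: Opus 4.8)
The plan is to set up a compactness argument for the rescaled eigenfunctions $\tilde v_m$ on the strip $[0,\pi]\times[0,+\infty)$, identify the limit as a nonnegative solution of a limiting eigenvalue problem, and use the normalization plus the hypothesis $\limsup_m(\mu_m-m)<1$ to pin down that the limit must be $\sin x$.

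First I would record the equation satisfied by $\tilde v_m$: it solves $\tilde L_{\mu_m}\tilde v_m=0$ on the rescaled domain $\tilde\Omega_m$, with $\tilde L_\mu$ as in \eqref{eq:Ltilde}, it vanishes on the lateral sides $x=0$, $x=\pi$ and on the rescaled slit $\tilde\Sigma_m\subset\{y=0\}$, it is even in $y$, and it is normalized by $\int_0^\pi \tilde v_m(x,1)\sin x\,dx=\pi/2$ together with $\tilde v_m\ge 0$. The coefficients of $\tilde L_{\mu_m}$ converge, uniformly on compact sets of $[0,\pi]\times[0,+\infty)$, to those of $\partial_{xx}+\partial_{yy}+\mu_\infty\,\cdot$ (here using $\mu_m/m\to 1$ and $\mu_m(\mu_m+1)/m^2\to 1$, so the zeroth-order coefficient tends to $1$), and they are uniformly elliptic with uniformly bounded coefficients on every such compact set; the lower-order term $-m^{-1}\tan(y/m)\partial_y$ is $O(1/m)$ there.

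Next I would get uniform bounds. An upper barrier of the form $C\cos(y/m^{?})\sin x$ — more precisely, I would look for a supersolution of $\tilde L_{\mu_m}$ of the form $A\,g(y)\sin x$ with $g>0$, $g(0)>0$, chosen so that $A g(1)\ge$ the (finite, by Harnack from the normalization) sup of $\tilde v_m$ on $\{y=1\}$; since $\limsup_m(\mu_m-m)<1$, the effective "potential" $\mu_m(\mu_m+1)/m^2$ stays bounded away from the threshold that would force exponential growth, so such a barrier exists on $[0,\pi]\times[0,R]$ for each fixed $R$ with constants uniform in $m$. This gives a uniform $L^\infty_{\loc}$ bound on $\tilde v_m$. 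Interior and boundary elliptic estimates (the lateral Dirichlet data is zero and smooth; near $\{y=0\}$ one uses evenness in $y$ to reflect, turning the slit into an interior obstacle-type condition, but away from the slit the function is a genuine even solution) then upgrade this to $C^{1,\alpha}_{\loc}$ bounds on $[0,\pi]\times[0,+\infty)$ minus a neighborhood of the endpoints of the slit. By Arzelà–Ascoli and a diagonal argument, along a subsequence $\tilde v_m\to v_\infty$ uniformly on compacts, with $v_\infty\ge 0$, $v_\infty=0$ on $\{x=0\}\cup\{x=\pi\}$, and $(\partial_{xx}+\partial_{yy}+c_\infty)v_\infty=0$ on $(0,\pi)\times(0,+\infty)$ for some $c_\infty\in(?,1]$, and $\int_0^\pi v_\infty(x,1)\sin x\,dx=\pi/2$, so $v_\infty\not\equiv 0$.

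The remaining step is to identify $v_\infty=\sin x$, and this is the main obstacle. Separating variables, any nonnegative solution vanishing on the two vertical sides is a nonnegative combination $\sum_{k\ge 1} b_k(y)\sin(kx)$ where $b_k''+(c_\infty-k^2)b_k=0$; for $k=1$ the mode $b_1$ satisfies $b_1''=(1-c_\infty)b_1$, and for $k\ge 2$ one has $b_k''=(k^2-c_\infty)b_k$ with $k^2-c_\infty>0$, so those modes are combinations of $\cosh$ and $\sinh$. Nonnegativity of $v_\infty$ on the whole half-strip, together with the control on growth coming from the uniform barrier (which survives in the limit and forbids genuine exponential growth), should force all modes $k\ge 2$ to vanish and force $c_\infty=1$ and $b_1(y)\equiv$ const; I would argue this by testing against $\sin(kx)$ on slices and using that a nonnegative function with at most the permitted growth cannot have a $\cosh$-type component, and that the surviving first mode must be the bounded solution, i.e. constant, whence $c_\infty=1$. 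Finally, evenness in $y$ and the barrier rule out the unbounded $k=1$ solution, the normalization fixes the constant to $1$, and since the limit is independent of the subsequence, the full sequence converges. The delicate points to watch are: justifying the limiting equation near the slit $\{y=0\}$ (here $\tilde v_m=0$ on a shrinking or fixed segment and $\partial_y\tilde v_m\le 0$ just outside it, and one must check no mass concentrates at the slit's endpoints), and making the growth/barrier bound genuinely uniform in $m$ when $\mu_m-m$ approaches $1$ from below.
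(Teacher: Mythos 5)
The identification step is where your proposal breaks, and the issue is the global bound on the blow-up limit. You need $\bar v$ bounded (or at least subexponential) on the whole half-strip in order to kill the $\cosh$ modes and the linear piece of the $k=1$ mode, but the barrier you propose cannot deliver this. Since $\mu_m>m$ whenever $\sigma_m>0$, the rescaled zeroth-order coefficient is $\mu_m(\mu_m+1)/m^2=1+(2(\mu_m-m)+1)/m+O(1/m^2)>1$, not below $1$. A supersolution of the form $g(y)\sin x$ would require $g''+(\mu(\mu+1)/m^2-1)g\le 0$ with $g>0$; the oscillation period of such a $g$ is of order $\sqrt m$, so no positive $g$ exists over the whole rescaled strip $[0,\pi]\times[0,m\pi/2]$ once $m$ is large. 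So the ``effective potential stays away from the threshold'' heuristic is false, and this is exactly the delicate point you flag at the end: as stated, it does not close.

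The paper closes this gap by a different mechanism. It introduces a comparison eigenfunction $V>0$ solving $L_\mu V=0$ with Dirichlet data on the larger axisymmetric domain $\Omega'=\{0<\theta<\pi/m,\ -a_\mu<\varphi<\pi/2\}$, where $a_\mu$ is chosen so that the first eigenvalue matches $\mu_m(\mu_m+1)$. The hypothesis $\limsup_m(\mu_m-m)<1$ is used precisely in Lemma~\ref{lem:blowupV}(a) to show $a_\mu\sqrt m\ge c_\eta$, so that after the $1/m$-rescaling the bottom boundary escapes to $-\infty$ and hence $\tilde V_m\to\sin x$ (via a Fourier-coefficient convexity argument and Lemma~\ref{lem:uniqueness}). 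An iterated boundary-Harnack estimate (Lemma~\ref{lem:bh}) then gives $v/V\to l$ exponentially fast for $\varphi\ge 1/m$, which in the blow-up limit yields $|\bar v-l\sin x|\le Ce^{-cy}\sin x$ for $y\ge1$ and hence the global bound. Finally, because of the thin-obstacle condition at $\{y=0\}$ (vanishing on the fixed-length slit, $\bar v_y\le0$ off it) the PDE does not extend across $y=0$ by reflection; the paper handles this via Lemma~\ref{lem:uniqueness}, which is formulated for supersolutions of $\lap w+w=0$ on $\{w>0\}$, a point your separation-of-variables reflection would also need to address.
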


%
%
%


We need some preliminaries.
In order to understand $v$ for large $m$, we compare it with $V>0$ which solves the same equation $L_\mu V=0$ with Dirichlet data on the larger domain
\[
\Omega':=\{0<\theta<\pi/m, -a_\mu<\varphi <\pi/2\}\subset \S.
\]
We also normalize $V$ in such a way that
\begin{equation}\label{eq:normalization}
\int_0^\pi \tilde v(x,1)\sin x\, {dx} =\int_0^\pi \tilde V(x,1)\sin x\, {dx} =\frac\pi2.
\end{equation}

Notice that the length $a_\mu\in(0,\tfrac\pi2)$ is uniquely determined by $\mu$ (or $m,\sigma$) and $a_m=0$ and $a_{m+1}=\pi/2$. Thus $a_\mu$ captures the eigenvalue defect $\mu-m$. On the other hand we show that, in the case $\mu\le m+1-\eta$, we have
\[
a_\mu\ge c_\eta\sqrt{m},
\]
which implies that, in the $1/m$-rescaling around the equator, $a_\mu$ escapes to $+\infty$ and this will force $\tilde V\to \sin{x} $.

This is the contenent of \Cref{lem:blowupV}. It may be helpful to keep in mind that, by separation of variables, $V=w(\varphi)\sin(m\theta)$ where $w$ solves the equation\footnote{\ In fact, up to a multiplicative constant, $w(\varphi)=P^m_\mu(\sin\varphi),$ where $P^\kappa_\nu$ is an associated Legendre function, see \cite{stegun}.}
\begin{equation}\label{eq:associatedlegendre}
w'' -\tan\varphi \, w' +\mu(\mu+1)w=\rho^{-2}{m^2} w\quad\text{in}\quad(-a_\mu,\tfrac\pi2).
\end{equation}

\begin{lemma}[Properties of $V$]\label{lem:blowupV} Given a sequence $\sigma_m\in (0,1)$, construct the relative $\mu_m$ and $\tilde V_m$ as above. Assume that there is $\eta>0$ such that
\[
\mu_m\le m+1-\eta\quad \text{for all}\quad m.
\]
\begin{itemize}
\item[(a)] There is $c_\eta>0$ such that
\begin{equation*}
a_\mu\ge c_\eta/\sqrt m.
\end{equation*}
\item[(b)] $\tilde V_m\to  \sin{x} $ uniformly on compact sets of $[0,\pi]\times[0,\infty)$.
\end{itemize}
\end{lemma}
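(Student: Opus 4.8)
The plan is to exploit the separation of variables $V = w(\varphi)\sin(m\theta)$ together with the associated Legendre ODE \eqref{eq:associatedlegendre}, and to treat parts (a) and (b) in sequence, since (b) will rely on the escape-to-infinity provided by (a).

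\emph{Part (a): lower bound on $a_\mu$.} First I would recall that $a_\mu$ is characterized as the point where the solution $w$ of \eqref{eq:associatedlegendre}, with final data $w(\tfrac\pi2)=1$, $w'(\tfrac\pi2)=0$ (equivalently $w\propto P^m_\mu(\sin\varphi)$), first vanishes as $\varphi$ decreases from $\tfrac\pi2$. The idea is that for $\mu \le m+1-\eta$ the ``potential" in \eqref{eq:associatedlegendre}, namely $\rho^{-2}m^2 - \mu(\mu+1) = (\cos\varphi)^{-2}m^2 - \mu(\mu+1)$, is large and positive near $\varphi = \pi/2$ (it blows up like $m^2/\varphi_\ast^2$ where $\varphi_\ast = \tfrac\pi2-\varphi$), which makes $w$ behave like a rapidly decaying solution and keeps it from vanishing until $\varphi$ has decreased by a definite amount. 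Quantitatively, changing variable to $\varphi_\ast = \tfrac\pi2-\varphi$ and comparing with the Euler-type equation $w'' = (m^2 - \mu(\mu+1))\varphi_\ast^{-2} w$ (whose solutions are powers $\varphi_\ast^{\alpha_\pm}$ with $\alpha_\pm = \tfrac12 \pm \sqrt{\tfrac14 + m^2 - \mu(\mu+1)}$), one sees the relevant length scale is where $\varphi_\ast^{-2}(m^2 - \mu(\mu+1))$ ceases to dominate $\mu(\mu+1)$; since $m^2 - \mu(\mu+1) \ge m^2 - (m+1-\eta)(m+2-\eta) \gtrsim (2\eta-1)m$ — wait, this is where care is needed, as $m^2 - \mu(\mu+1)$ can be \emph{negative} when $\mu$ is close to $m+1$. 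The correct statement is that $a_\mu \ge c_\eta/\sqrt m$: on the scale $\varphi_\ast \sim 1/\sqrt m$ the potential term $\rho^{-2}m^2 \sim m^2 \cdot (\pi/2 - \varphi_\ast)^{-2}$—no. Let me restate: near the equator $\varphi = 0$ one has $\rho = \cos\varphi \approx 1$, so $\rho^{-2}m^2 \approx m^2$ dominates everything, and $w$ is essentially $\cosh$-like there; the issue is only near $\varphi_\ast \to 0$. I would use a Sturm comparison: on $[\tfrac\pi2 - c/\sqrt m, \tfrac\pi2]$ the equation for $w$ has coefficient of $w$ equal to $\mu(\mu+1) - \rho^{-2}m^2$, and one bounds this below, using $\rho^{-2} = (\sin\varphi_\ast)^{-2} \le \varphi_\ast^{-2}(1+O(\varphi_\ast^2))$ reversed appropriately, to show the solution cannot oscillate (cannot reach zero) on an interval of length $c_\eta/\sqrt m$. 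The cleanest route is: rescale $\varphi = \tfrac\pi2 - s/m$; then \eqref{eq:associatedlegendre} becomes, to leading order, $w_{ss} \approx (s^{-2}\cdot 1 \cdot\text{stuff})$... and one reduces to a Bessel/Euler comparison on the $s$-variable showing the first zero occurs at $s \gtrsim c_\eta\sqrt m$, i.e. $a_\mu = \tfrac\pi2 - \varphi_{\text{zero}} \gtrsim c_\eta/\sqrt m$. I expect Legendre-function asymptotics (or a direct ODE barrier argument) from \Cref{sec:legendre} to do this cleanly, and this is the \textbf{main obstacle}: getting the sign and the scaling of the potential right uniformly in $m$.

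\emph{Part (b): blow-up of $\tilde V_m$.} Given (a), the domain $\Omega'$ in the rescaled coordinates $(x,y) = (m\theta, m(\varphi + a_\mu))$... actually $(x,y)=(m\theta,m\varphi)$ with $y \in (-ma_\mu, m\pi/2)$, so the lower boundary $y = -ma_\mu$ escapes to $-\infty$ by part (a), and the upper boundary $y = m\pi/2 \to +\infty$. The operator is $\tilde L_\mu$ from \eqref{eq:Ltilde}, whose coefficients converge locally uniformly to those of $\partial_{xx} + \partial_{yy} + 1$ (since $\lambda_\mu/m^2 = \mu(\mu+1)/m^2 \to 1$ and $\cos(y/m)\to 1$, $\tan(y/m)/m \to 0$ on compacts). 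So $\tilde V_m$ solves an equation converging to $\Delta \tilde V + \tilde V = 0$ on a domain exhausting the strip $[0,\pi]\times\R$, with zero Dirichlet data on $x=0$ and $x=\pi$, and it is positive and normalized by \eqref{eq:normalization}. I would first establish uniform bounds: the upper barrier $C\cos(y/m)\sin x$ (valid as noted in the excerpt) gives a uniform $L^\infty$ bound on compacts, and Harnack plus boundary estimates give $C^1_\loc$ bounds. Passing to a subsequential limit $\tilde V_\infty \ge 0$, it solves $\Delta \tilde V_\infty + \tilde V_\infty = 0$ on $[0,\pi]\times\R$ with $\tilde V_\infty = 0$ on $x \in\{0,\pi\}$. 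Separating variables, any such bounded-on-compacts nonnegative solution must be a nonnegative multiple of $\sin x$ (the $x$-Fourier modes $\sin(kx)$ for $k\ge2$ force $y$-behavior $e^{\pm\sqrt{k^2-1}\,y}$, which combined with positivity and the absence of the exponentially growing/decaying consistency forces them to vanish; the $k=1$ mode is $y$-independent, $a+by$, and positivity on all of $\R$ forces $b=0$). Then the normalization $\int_0^\pi \tilde V_\infty(x,1)\sin x\,dx = \pi/2$ pins the constant to $1$, so $\tilde V_\infty = \sin x$; since the limit is unique, the full sequence converges. The one subtlety here is ruling out the linear-in-$y$ term and the higher modes using \emph{only} local bounds — but positivity on the whole line (which persists in the limit because $ma_\mu\to\infty$, so the limit domain really is all of $\R$ in $y$) handles it: a function $(a+by)\sin x + \sum_{k\ge2} c_k(y)\sin(kx)$ that is $\ge 0$ for all $y\in\R$ must have $b=0$ and, testing against $\sin(kx)$ and using the maximum principle / Liouville on each mode, $c_k \equiv 0$.

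So the skeleton is: (1) ODE/Legendre comparison for the $a_\mu \gtrsim c_\eta/\sqrt m$ lower bound; (2) uniform barriers and elliptic estimates for $\tilde V_m$; (3) compactness and identification of the limit via separation of variables and positivity on the full strip; (4) uniqueness of the limit upgrades subsequential to full convergence. With \Cref{lem:blowupV} in hand, \Cref{prop:blowup} follows by sandwiching $\tilde v_m$ between $0$ and $\tilde V_m$ (both solve $L_\mu\,\cdot\, = 0$, $v \le V$ by the maximum principle on the smaller domain $\Omega \subset \Omega'$, and the normalizations \eqref{eq:normalization} agree on the slice $y=1$), so $\tilde v_m \to \sin x$ as well.
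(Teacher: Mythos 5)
Your part (a) goes wrong in two related ways, and part (b) is essentially right but cuts a corner.

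\textbf{Part (a).}
The proposed Sturm/Euler comparison is aimed at the wrong region of the sphere, and the rescaling you suggest (``$\varphi=\tfrac\pi2-s/m$'', i.e.\ scale $1/m$ near the north pole) is not where the difficulty lives. The quantity $a_\mu$ measures how far below the \emph{equator} the domain $\Omega'$ must be extended so that its first Dirichlet eigenvalue equals $\mu(\mu+1)$; the delicate behaviour of $V$ happens near $\varphi=0$, not near the pole. Moreover, your own observation that $m^2-\mu(\mu+1)$ is \emph{negative} for $\mu\in(m,m+1)$ is not a footnote to be resolved later: it signals that the Euler-type comparison you are reaching for does not have the right sign structure, which is why you flag this step as ``the main obstacle.''

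The paper's mechanism is different and it is the crucial idea. Rescale by $1/\sqrt m$ \emph{around the equator}, i.e.\ set $t=\sqrt m\,\varphi$, and look at the $\sin(m\theta)$-Fourier coefficient $\tilde w(t)$ of $V$. Using $\cos^2(t/\sqrt m)=1-t^2/m+\dots$ and $\mu=m+\delta$, the ODE \eqref{eq:associatedlegendre} becomes, in the limit $m\to\infty$,
\[
\bar w''+(1+2\bar\delta-t^2)\,\bar w=0,
\]
a quantum harmonic oscillator at energy level $1+2\bar\delta$. If, by contradiction, $a_\mu\sqrt m\to 0$, the Dirichlet condition for $\bar w$ moves to $t=0$, forcing $\bar w(0)=0$, $\bar w\ge 0$, $\bar w\to 0$ at $+\infty$ (the last via the explicit barrier $V_1=z\rho^m\sin(m\theta)$, which yields $\tilde w\lesssim te^{-t^2/2}$). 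Pairing with the half-line ground state $\psi_1(t)=te^{-t^2/2}$ (energy $3$) and integrating by parts then gives $2(1-\bar\delta)\int_0^\infty\bar w\psi_1=0$, which is impossible for $\bar\delta<1$. So the real content is a spectral-gap statement for the Hermite operator, not a Bessel/Euler comparison near the pole. Your approach as written would not produce the bound.

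\textbf{Part (b).}
The overall scheme (barriers, Harnack, compactness, identify the limit on the infinite strip) coincides with the paper's, and your proposed use of $V_1$ and $C\cos(y/m)\sin x$ as upper barriers is what's needed. However, the step ``testing against $\sin(kx)$ and using the maximum principle/Liouville on each mode, $c_k\equiv 0$'' is too quick: the higher Fourier coefficients $c_k(y)$ are not signed, so Liouville does not apply directly. The paper's clean route is to first note that the $k=1$ coefficient $f(y)=\int_0^\pi\bar V(x,y)\sin x\,dx$ is nonnegative and satisfies $f''=0$ on all of $\R$, hence is constant; this yields a \emph{uniform} bound $\bar V\le C$ on the full strip, after which \Cref{lem:uniqueness} (a Liouville-type statement for bounded nonnegative solutions of $\lap w+w\ge 0$) finishes the job. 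Your variant is repairable but should be routed through this uniform bound; the direct mode-by-mode Liouville does not go through as stated.

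A minor remark on your concluding paragraph: deducing \Cref{prop:blowup} from \Cref{lem:blowupV} requires more than the sandwich $0\le v\le V$ (which does not prevent $v\ll V$ in the blow-up region). The paper uses a boundary Harnack iteration (\Cref{lem:bh}) to show $v/V\to l\in(c,C)$ exponentially fast, which is what actually forces $\tilde v_m$ and $\tilde V_m$ to have the same limit profile.
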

\begin{proof}
We call universal constants that are uniform in $m$ and $\sigma$.
We also drop the subscript $m$ from $V_m$.

We argue by blow-up. In both (a) and (b) we will need the following barrier from above:
\begin{equation}\label{eq:V1def}
 V_1:= z\rho^m \sin(m\theta)  \ge 0\quad \text{in}\quad\Omega,
\end{equation}
which satisfies $L_{m+1}V_1=0,$ so $L_\mu V_1\le0$ in $\Omega$.

We first prove (a).

We consider $\hat V$ to be the multiple of $V$ such that 
$$\fint_0^{\pi/m}\hat V(\theta,1/\sqrt{m})\, d\theta= 1.$$
Since in the box of sidelegth $1/m$ and latitude $\varphi=1/\sqrt m$, the rescaled operator $\tilde L_\mu$ still looks like a $O(1/m)$-perturbation of $\lap+1$, there is a universal constant $C$ such that
\[
\hat V \le C\sqrt m V_1\quad\text{on}\quad[0,\pi/m]\times \{\tfrac{1}{\sqrt m}\}.
\]
Hence the maximum principle gives
\begin{equation}\label{eq:barriersV2}
\hat V(\theta,\varphi)\le C\sqrt m  V_1(\theta,\varphi)\quad \text{in}\quad\{\varphi\ge1/\sqrt m\}\cap\Omega.
\end{equation}
Now we consider the rescaled Fourier coefficient
\[
\tilde w(t/\sqrt m):= \fint_0^{\pi/m} \hat V(\theta,t/\sqrt m)\sin(m\theta)\, d\theta,\qquad t\in(-\sqrt m \cdot a_\mu,\sqrt m\cdot  \tfrac\pi2),
\]
which, setting $\mu:=m+\delta,$ solves in that interval the equation (see \eqref{eq:associatedlegendre})
\begin{equation}\label{eq:a}
\tilde w '' -\frac{\tan(t/\sqrt m)}{\sqrt m}\tilde w' + \big\{\frac{(m+\delta)(m+1+\delta)}{m}-\frac{m}{\cos^2(t/\sqrt m)}\big\}\tilde w =0,
\end{equation}
with the conditions 
\begin{equation}\label{eq:b}
\tilde w (1) =1,\quad \tilde w(-\sqrt m a_\mu)=0,\quad \tilde w>0.
\end{equation}
Additionally, rescaling and integrating \eqref{eq:barriersV2} we find for $m$ large
\begin{equation}\label{eq:g}
 \tilde w\le C t e^{-t^2/2}\quad \text{for all}\quad 1\le t\le \tau_m,
\end{equation}
for a universal sequence $\tau_m\to+\infty$. Indeed, using the explicit formula \eqref{eq:V1def}, it is readily checked that
\[
\sqrt m \fint_0^{\pi/m} V_1(\theta,t/\sqrt m)\sin(m\theta)\, d\theta \to te^{-t^2/2}\quad\text{as}\quad m\to+\infty,
\]
locally uniformly in $t$. By elliptic estimates on \eqref{eq:a}, \eqref{eq:g} also gives 
\begin{equation}\label{eq:derivativebound}
|\tilde w'(1)|\le C.
\end{equation}

Let's assume by contradiction that (a) fails for some $\eta>0$, i.e.,
$$\mu_m-m\to \bar\delta\le1-\eta,\quad \text{but}\quad a_\mu \sqrt m \to0.$$ 

Putting together \eqref{eq:a}, \eqref{eq:b}, \eqref{eq:derivativebound} and \eqref{eq:g} and letting $m\to+\infty$ we find by compactness a function $\bar w\colon[0,+\infty)\to[0,+\infty)$ which solves 
\[
\bar w''(t) + (1+2\bar \delta-t^2)\, \bar w(t)=0,\quad \bar w(0)=0, \quad \bar w\ge 0,\quad \bar w(1)=1,
\]
and
\[
\bar w(t) \to 0\quad\text{as}\quad t\to+\infty.
\]
By the ODE, $\bar w$ is eventually convex, positive and infinitesimal, so $\bar w'(t)$ is infinitesimal as well.
The existence of such a $\bar w$ is incompatible with $\bar\delta <1$. Indeed, consider the first eigenfunction $\psi_1(t):=te^{-t^2/2}$ which solves 
\[
\psi_1 '' + (3-t^2) \psi_1 =0,\quad \psi_1(0)=0, \quad \psi_1'(+\infty)=\psi_1(+\infty)=0.
\]
Multiply this equation by $\bar w$ and integrate it in the interval $[0,L]$, finding
\begin{align*}
0&= \int_0^L \bar w \psi_1'' + \int_0^L \bar w (3-t^2) \psi_1 \\
&= \bar w (L) \psi_1'(L)-\bar w'(L) \psi_1(L) +2(1-\bar\delta)\int_0^L\bar w \psi_1.
\end{align*}
Since the boundary terms are infinitesimal as $L\to+\infty,$ we find a contradiction. This concludes the proof of part (a).

We turn to the proof of part (b).

Recalling the normalization \eqref{eq:normalization} and arguing as in the proof of (a), but rescaling this time by $1/m$ rather than $1/\sqrt m$, we find that
\begin{equation}\label{eq:c}
 V(\theta,\varphi) \le C\,  m\, V_1(\theta,\varphi)\quad\text{in}\quad\{\varphi\ge1/ m\}\cap\Omega,
\end{equation}
for some universal $C$.
This, together with the Harnack inequality for $\tilde L_\mu$ and part (a), gives the rough bound:
\begin{equation}\label{eq:sublinear}
\tilde V (x,y) \le C(y+e^{-Cy})\quad \text{for all}\quad (x,y)\in[0,\pi]\times[-c_\eta \sqrt m, m\tfrac\pi4].
\end{equation}
Which, by elliptic estimates on $\tilde L_\mu$ and part (a), implies that  
$$\tilde V\to \bar V$$ 
locally uniformly in the strip $[0,\pi]\times\R$, with 
\begin{equation*}\label{eq:pdeV}
\lap \bar V+\bar V = 0,\quad  \bar V\ge0,\quad   \bar V=0\quad\text{on}\quad\{0,\pi\}\times (0,\infty),\quad \int_0^\pi \bar V(x,1) \sin{x} \,dx=\frac\pi2.
\end{equation*}
These conditions imply that $\bar V= \sin{x} .$ Indeed, the Fourier coefficient 
$$f(y):=\int_0^\pi \bar V(x,y) \sin{x} \, dx,$$
must be constantly equal to $f(1)=\pi/2$, because $f\ge 0$ and $f''=0$ on the whole real line.
This implies that $\bar V$ is uniformly bounded above in the full strip, and by \Cref{lem:uniqueness} we conclude that $\bar V= \sin{x} .$
\end{proof}
The Boundary Harnack principle implies that $\tilde V$ and $\tilde v$ have the same behaviour as $y\to+\infty$.
\begin{lemma}[$v$ and $V$]\label{lem:bh}
    For all $m\ge1 $ and $\sigma\in[0,1]$ we have $c\le l\le C$ such that
    \begin{align}
        \big|\frac vV-l|&\le Ce^{-cm\varphi}\quad \text{in}\quad\Omega\cap\{\varphi\ge1/m\}, \label{eq:consequencebh}
    \end{align}
where $v,V$ are normalized as in \eqref{eq:normalization}.

The constants $C$ and $c$ are independent from $m,\sigma$.\end{lemma}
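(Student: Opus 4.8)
The plan is to pass to the Fourier decomposition in $\theta$ and to exploit that $V$ is a single mode. First, since $\Omega'=(0,\pi/m)_\theta\times(-a_\mu,\tfrac\pi2)_\varphi$ is a product domain, its positive first Dirichlet eigenfunction separates, $V(\theta,\varphi)=w(\varphi)\sin(m\theta)$, where $w>0$ on $(-a_\mu,\tfrac\pi2)$ is the solution of \eqref{eq:associatedlegendre} which is recessive (bounded, $\sim(\tfrac\pi2-\varphi)^m$) at the pole $\varphi=\tfrac\pi2$. In terms of Fourier coefficients the normalization \eqref{eq:normalization} reads $w(1/m)=1$. Above the equator there is no obstacle, so for $\varphi>0$ the function $v$ solves $L_\mu v=0$ on $\{0<\theta<\pi/m\}$ and vanishes on $\{\theta=0\}\cup\{\theta=\pi/m\}$; hence I would write
\[
v(\theta,\varphi)=\sum_{k\ge1}v_k(\varphi)\sin(mk\theta),\qquad v_k(\varphi)=\tfrac{2m}\pi\int_0^{\pi/m}v(\theta,\varphi)\sin(mk\theta)\,d\theta,
\]
with each $v_k$ solving \eqref{eq:associatedlegendre} with $m$ replaced by $mk$, and with $v_1(1/m)=1$ by \eqref{eq:normalization}.

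Since $v$ is bounded, each $v_k$ is bounded at $\varphi=\tfrac\pi2$, so it is a scalar multiple of the recessive solution $w_k$ of its equation; I normalize $w_k(1/m)=1$, which is legitimate because $w_k>0$ on $[1/m,\tfrac\pi2)$: for $k=1$ this is the first paragraph, and for $k\ge2$ the equation is disconjugate there, since $\mu\le m+1$ makes its zeroth order coefficient $\lambda_\mu-(mk)^2/\cos^2\varphi$ negative on $[1/m,\tfrac\pi2)$. Thus $v_k=a_kw_k$ with $a_k:=v_k(1/m)$, and for $k=1$ the functions $w_1$ and $w$ are recessive solutions of the \emph{same} equation, so by the normalizations $v_1=w$. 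The leading modes cancel:
\[
v-V=\sum_{k\ge2}a_k\,w_k(\varphi)\sin(mk\theta),\qquad\text{hence}\qquad\Bigl|\frac vV-1\Bigr|\ \le\ \frac1{w(\varphi)}\sum_{k\ge2}k\,|a_k|\,w_k(\varphi),
\]
using $V=w\sin(m\theta)$ and the elementary bound $|\sin(mk\theta)|\le k\sin(m\theta)$ for $\theta\in(0,\pi/m)$. So the lemma follows, with $l=1$, once the last sum is shown to be $\le Ce^{-cm\varphi}w(\varphi)$, and $l=1\in[c,C]$ for suitable universal constants.

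For this I would use two inputs. First, the coefficients decay rapidly in $k$: with \eqref{eq:normalization} in force, the barriers and Harnack inequality for $\tilde L_\mu$ from the proof of \Cref{lem:blowupV} bound $\tilde v(\cdot,1)$ universally, and elliptic estimates for $\tilde L_\mu$ (whose coefficients are uniformly bounded) upgrade this to uniform $C^\infty$ bounds on $\tilde v(\cdot,1)$; since $a_k$ is, up to a constant, the $k$-th Fourier coefficient of $\tilde v(\cdot,1)$, this gives $|a_k|\le C_Nk^{-N}$ for every $N$, uniformly in $m,\sigma$, and in particular $\sum_{k\ge2}k|a_k|\le C$. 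Second, the higher modes decay exponentially: for $k\ge2$ the ratio $g_k:=w_k/w$ is positive, equals $1$ at $\varphi=1/m$, and solves a linear second order ODE whose zeroth order coefficient is $-m^2(k^2-1)/\cos^2\varphi\le-3m^2$. Because this coefficient is large and negative — of size $\gtrsim m^2$ — while $w=w_1$ is only marginally decaying (it is the leading Legendre mode for $\mu\in(m,m+1)$, and is bounded above and below on bounded subintervals of $[1/m,\tfrac\pi2)$), a comparison — either with the barrier $w(\varphi)\,e^{-cm(\varphi-1/m)}$ away from the pole together with an explicit power-of-$\cos\varphi$ barrier near $\varphi=\tfrac\pi2$, or via a Riccati argument for $g_k'/g_k$ — gives $g_k(\varphi)\le Ce^{-cm\varphi}$ on $[1/m,\tfrac\pi2)$ with $c,C$ universal. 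Combining the two inputs with the displayed inequality proves $|v/V-1|\le Ce^{-cm\varphi}$ in $\Omega\cap\{\varphi\ge1/m\}$ (on the edges $\theta\in\{0,\pi/m\}$ this is understood in the limiting sense, all ratios $\sin(mk\theta)/\sin(m\theta)$ being bounded).

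I expect the exponential decay of $g_k$ to be the main obstacle, and within it the following delicacy: equation \eqref{eq:associatedlegendre} for the leading mode $w=w_1$ is \emph{not} disconjugate on all of $[1/m,\tfrac\pi2)$, its zeroth order coefficient $\lambda_\mu-m^2/\cos^2\varphi$ being positive on the transition region $\varphi\in(1/m,\sim m^{-1/2})$, so one cannot compare $w_1$ to elementary barriers there by a bare maximum principle; moreover the barrier comparisons for $g_k$ must be matched between this region, the bulk, and the pole. The transition region is bounded after the $m^{-1/2}$ rescaling, and I would control $w_1$ on it by the same compactness/blow-up argument already used in \Cref{lem:blowupV}, which pins down $w_1$ up to universal multiplicative constants; the behaviour at the pole $\varphi=\tfrac\pi2$ is governed by the leading Frobenius asymptotics of the associated Legendre functions, as in the analysis of \Cref{sec:legendre}. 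Once these two regions are dealt with, the bulk comparison is routine.
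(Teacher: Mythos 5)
Your argument takes a genuinely different route from the paper's. The paper proves the lemma by a Boundary Harnack iteration at scale $1/m$: it shows that if $aV\le v\le bV$ on $\{\varphi\ge\beta-1/m\}$ then $a'V\le v\le b'V$ on $\{\varphi\ge\beta\}$ with $b'-a'\le(1-c_0)(b-a)$, using only the Harnack inequality, the Hopf lemma for $v-aV$ and $bV-v$, and the maximum principle for $L_\mu$; iterating gives geometric decay of the oscillation, with $l$ produced as a limit but not identified. You instead Fourier-decompose in $\theta$, use that $V$ is a pure first mode $w\sin(m\theta)$, and observe that the normalization \eqref{eq:normalization} forces $v_1=w$ exactly — a nice simplification that identifies $l=1$ and reduces everything to the tail. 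The rapid decay $|a_k|\le C_Nk^{-N}$, uniform in $m,\sigma$, does follow from elliptic estimates on $\tilde v(\cdot,1)$ once one has a uniform $L^\infty$ bound there (as in the proof of \Cref{prop:blowup}), and the elementary bound $|\sin(mk\theta)|\le k\sin(m\theta)$ is correct. The real work, as you flag yourself, is $g_k=w_k/w_1\le Ce^{-cm\varphi}$. Rather than matching barriers across the transition, bulk, and pole zones, it is cleaner to use the Wronskian identity
\[
\bigl(\cos\varphi\,(w_1w_k'-w_kw_1')\bigr)'=m^2(k^2-1)\sec\varphi\;w_1w_k,
\]
which, integrated from $\varphi$ to $\tfrac\pi2$ (the boundary term vanishes by the Frobenius behaviour at the pole), shows that $g_k$ is decreasing and gives
\[
-\frac{g_k'(\varphi)}{g_k(\varphi)}=\frac{m^2(k^2-1)}{\cos\varphi\;w_1(\varphi)w_k(\varphi)}\int_\varphi^{\pi/2}\sec s\;w_1w_k\,ds .
\]
For $\varphi\le\tfrac\pi2-\eps_0$ with $\eps_0$ small universal, restricting the integral to $[\varphi,\varphi+\tfrac1m]$ and applying a one-dimensional Harnack inequality for $w_1$ and $w_k$ at scale $1/m$ (the ODE coefficients are $O(m^2)$ and $O(1)$ there, regardless of the sign of the zeroth-order term, so the transition region causes no trouble) yields $-g_k'/g_k\ge cm(k^2-1)$; for $\varphi>\tfrac\pi2-\eps_0$ one uses that $g_k$ is decreasing and shrinks $c$. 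Together with $g_k\le g_2$ (same Wronskian argument with $w_1$ replaced by $w_2$) and $\sum_k k|a_k|\le C$, this closes your proof. Overall your approach is correct and more explicit — it yields $l=1$ and isolates the mechanism (mode separation plus ODE decay) — while the paper's iteration is shorter, works directly at the PDE level, and avoids any dedicated analysis of the Legendre ODEs beyond what is needed elsewhere.
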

\begin{proof}
    We call universal constants that are uniform in $m$ and $\sigma$.
    
    \textbf{Claim.} Assume that for some positive numbers $a\le b$ and some $\beta \in [\tfrac4m ,\tfrac{\pi}{3}]$ we have
    \begin{equation*}
        aV \le v\le bV \quad \text{ in }\quad \{\varphi\ge \beta-\tfrac1m\}\cap\Omega.
    \end{equation*}
    Then there are $a\le a'\le b'\le b$ such that
    \begin{equation*}
        a'V \le v\le b'V \quad \text{ in }\quad \{\varphi\ge \beta\}\cap\Omega,
    \end{equation*}
    with
    \begin{equation*}
        b'-a'\le (1-c_0)(b-a),
    \end{equation*}
    for some universal $c_0>0$ small.

    We show how to conclude from the claim. Assume
    \[
    a_0\le \frac{v}{V} \le b_0\quad \text{ in }\quad{[0,\tfrac{\pi}{m}]\times \{\tfrac{1}{m}\}},
    \]
    then an iteration of the claim gives that for some $l\in [a_0,b_0]$ and all $k\le {m\pi}/{3}$, it holds 
    \[
 \Big|\frac vV-l| \le (1-c_0)^k(b_0-a_0)\quad \text{in}\quad{[0,\tfrac{\pi}{m}]\times [\tfrac 1m +\tfrac k m ,\tfrac\pi2)}.
    \]
    For suitable universal constants $C$ and $c$ this becomes
    \[
    \Big|\frac vV -l\Big| \le Ce^{-c m \varphi}(b_0-a_0)\quad\text{in}\quad\{\varphi \ge\tfrac 1 m \},
    \]
    which is the sought inequality.
    
    {\it Proof of the Claim.}
        Set $\delta:=b-a.$ 
        Denote by $q$ the point on the sphere with coordinates $(\theta,\varphi)=(\tfrac{\pi}{2m},\beta)$.
        Look in the cube $Q:=\{|m\theta-\tfrac\pi2|\le \tfrac\pi4, |\varphi-\beta|\le \tfrac{7}{8m} \}$, either
        \begin{equation}\label{eq:either}
        v-b V\le -c_1\delta V \quad \text{in}\quad Q,
        \end{equation}
        or 
        \[
        (v-a V)(q')\ge (1-c_1)\delta V(q') \quad \text{ for some }q'\in Q ,
        \]
        which, by the Harnack inequality in $Q$, becomes
        \begin{equation}\label{eq:or}
      v-aV \ge c_2 \delta V\quad \text{in}\quad Q,
        \end{equation}
        As before, the constants $c_1,c_2$ are uniform in $m$ because the rescaled operator $L_\mu$ is a $O(1/m)$-perturbation of $\lap+1$.
        In either cases, one can use the regularity of $V$ and the Hopf Lemma on $v-aV$ (or $bV-v$) to move \eqref{eq:either} (or \eqref{eq:or}) from $Q$ to the whole segment $[0,\pi]\times\{\beta\}$. And then, by the maximum principle for $L_\mu$, one can extend \eqref{eq:either} (or \eqref{eq:or}) to the whole $\Omega\cap \{\varphi\ge \beta\}.$ This proves the claim. 
\end{proof}
We combine these two Lemmas to show that any blowup of $\tilde v$ --- which solves $\lap+1=0$ on the strip ---  has $ \sin{x} $ as boundary condition at $y=+\infty$. With positivity this is enough to conclude that it must be equal to $ \sin{x} $, proving \Cref{prop:blowup}. 

\begin{proof}[Proof of \Cref{prop:blowup}]
In this proof $c$ and $C$ denote constants which are uniform in $m$ and $\mu$, we also drop the subscript $m$ from $\tilde v_m$ and assume $\tilde v$ is reflected evenly in the $y$ variable.

As in \Cref{lem:blowupV} for $V$, the normalization gives \begin{equation}\label{eq:pippo}
    c \sin{x} \le \tilde v(x,1)\le C \sin{x} \quad \text{for all}\quad x\in[0,\pi],
\end{equation}
and with the barrier $\sin{x}\cdot\cos y $ one finds
\[
 \tilde v \le C\quad\text{in}\quad[0,\pi]\times[-1,1].
\]
We also have that 
\begin{equation}\label{eq:holderbound}
\|\tilde v\|_{C^\alpha([0,\pi]\times[-1,1])}\le C,
\end{equation}
see the end of the proof for a quick proof of this bound.

By \Cref{lem:bh} there is some constant $l\in(c,C),$ such that
\begin{equation}\label{eq:kl}
    \Big|\frac{v(\theta,\varphi)}{ V(\theta,\varphi)} -l\big|\le Ce^{-c m\varphi}\quad \text{for all}\quad1/m\le\varphi\le\pi/2,
\end{equation}
so, using elliptic estimates for $\tilde L_\mu$ and \eqref{eq:sublinear}, this implies $C^1_\loc([0,\pi]\times(0,+\infty))$ bounds for $\tilde v$. Putting things together:
\[
\tilde v \to \bar v\qquad\text{locally uniformly in the full strip.}
\]

Rescaling by $1/m$, passing to the limit \eqref{eq:kl} and using \Cref{lem:blowupV} (since $\lim_m\mu-m<1$) we find
\[
|\bar v(x,y)-l \sin{x} |\le Ce^{-c y}  \sin{x} \quad \text{for all}\quad y\ge1,
\]
so $\bar v$ is bounded. In conclusion, $\bar v$ solves
\begin{equation}\label{eq:9}
\begin{cases}
    \lap \bar v+\bar v=0& \text{ in }(0,\pi)\times (0,\infty),\\
     \bar v>0& \text{ in }(0,\pi)\times (0,\infty),\\
    \bar v=0&\text{ on }\{0,\pi\}\times (0,\infty),\\
    \bar v\cdot \bar v_y =0 &\text{ on }[0,\pi]\times\{0\},\\
    \bar v\le C & \text{ in }[0,\pi]\times [0,\infty).
\end{cases}
\end{equation}
By \Cref{lem:uniqueness} below and the normalization we conclude that $\bar v= \sin{x} $.

For completeness, let us sketch a short proof of \eqref{eq:holderbound} for possibly unsigned solutions of $\lap \tilde v+ \tilde v=0,$ assuming $|\tilde v|\le1$ in $[0,\pi]\times[-2,2]$. This proof is inspired by a similar computation in \cite{FrS}.

Extend $\tilde v$ to be zero outside the strip and the set
\[
F\ :=\ [0,\sigma]\times\{0\}\ \cup\ \{0,\pi\}\times \R\quad \subset \quad \{v=0\}.
\]
If $B(x,r/2)$ does not intersect $F$, then by elliptic estimates
\[
\int_{B_{r/2}(x)}|\nabla \tilde v|^2 \le C r^2 .
\]
Due to the simple geometry of $F$, if $F\cap B(x,r/2)\neq \emptyset$, then Poincar\'e inequality applies in the annulus yielding
\begin{equation}\label{eq:poincare}
\int_{B_r(x)\setminus B_{r/2}(x)} \tilde v^2 \le C_F\, r^2 \int_{B_r(x)\setminus B_{r/2}(x)}|\nabla \tilde v|^2.
\end{equation}
We emphasize that $C_F$ can be taken {\it uniform in $\sigma$}.

Combining \eqref{eq:poincare} with the Caccioppoli inequality one finds
\[
\int_{B_{r/2}(x)}|\nabla \tilde v |^2 \le  C_F  \int_{B_r(x)\setminus B_{r/2}(x)}|\nabla \tilde v|^2 + Cr^2.
\]
Now the hole-filling trick and an iteration give, for some small $\alpha=\alpha(C_F)>0,$ the decay
\[
\int_{B_s(x)}|\nabla \tilde v|^2 \le C s^{2\alpha} \quad\text{for all}\quad x\in [0,\pi]\times[-1,1],\ s\in(0,\tfrac12],
\]
which is \eqref{eq:holderbound} by Campanato's criterion.
\end{proof}

\begin{lemma}\label{lem:uniqueness}
Assume that $w$ is a continuous function that vanishes on the sides of the infinite strip $[0,\pi]\times \R$, that solves
$$\lap w+w\ge0\quad \text{ in }\quad \{w>0\},$$ 
and that
$$0<\sup w\le 1.$$
Then $w$ is a multiple of $ \sin{x} $. 
\end{lemma}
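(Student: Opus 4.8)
The plan is to prove a Liouville-type statement: the only bounded, nonnegative (or nonnegative where positive) subsolution of $\lap w + w = 0$ on the infinite strip vanishing on the lateral sides, normalized by $0 < \sup w \le 1$, is a multiple of $\sin x$. The natural tool is Fourier expansion in the $x$-variable against the Dirichlet eigenfunctions $\{\sin(kx)\}_{k\ge 1}$ of the cross-section, which diagonalizes $\lap$ on the strip. First I would record that, since $w$ is continuous, vanishes on $\{0,\pi\}\times\R$, and $\sup w \le 1$, its Fourier coefficients $f_k(y) := \tfrac2\pi\int_0^\pi w(x,y)\sin(kx)\,dx$ are well-defined, bounded, and — away from the obstacle set on $\{y=0\}$, hence certainly on $\{y \ne 0\}$ where $w$ is genuinely a solution by elliptic regularity — satisfy $f_k'' + (1-k^2)f_k = 0$. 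For $k = 1$ this gives $f_1'' = 0$, so $f_1$ is affine; boundedness forces $f_1 \equiv \text{const} =: a \ge 0$. For $k \ge 2$ the equation is $f_k'' = (k^2-1)f_k$ with $k^2 - 1 > 0$, whose bounded solutions on all of $\R$ are identically zero; hence $f_k \equiv 0$ for all $k \ge 2$. Therefore $w(x,y) = a\sin x$ on $\{y \ne 0\}$, and by continuity on all of the strip, and $a>0$ since $\sup w > 0$.

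The subtlety is that $w$ is only assumed to be a \emph{subsolution} $\lap w + w \ge 0$ on $\{w>0\}$, not a genuine solution, and only on a one-sided subset of the boundary segment $\{y=0\}$ does the equation degrade. I would handle this by first treating the two half-strips $\{y>0\}$ and $\{y<0\}$ separately, where $w$ restricted to the open half-strip is a classical bounded solution of $\lap w + w = 0$ wherever $w>0$; in fact, since $w$ is even in $y$ in the application (this is how $\tilde v$ was extended), it suffices to work on $\{y > 0\}$ and conclude by reflection — but to keep the lemma self-contained I would instead argue directly. On $\{y \ne 0\}$ one can still run the Fourier argument if $w$ is a genuine solution there; the issue is only whether $w$ solves $\lap w + w = 0$ (equality) on $\{w > 0\} \cap \{y \ne 0\}$, which it does by hypothesis, and whether the nodal set interferes with the computation $f_k'' + (1-k^2) f_k = 0$. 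To sidestep this cleanly, I would instead use the comparison/maximum-principle route: compare $w$ with $a \sin x$ where $a := \limsup_{y\to+\infty}\sup_x w(x,y)/\sin x$ (finite by the bound $\sup w\le1$ and interior regularity up to the lateral sides via Hopf/boundary estimates).

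A cleaner and more robust approach, which I expect to be the one actually used, is the sliding/maximum-principle argument. Set $a^* := \sup\{a \ge 0 : w \ge a\sin x \text{ on the whole strip}\}$ — well-defined and finite. If $a^* = 0$ one shows $w \le 0$, contradicting $\sup w > 0$; more precisely one runs the symmetric argument with $b^* := \inf\{b : w \le b\sin x\}$, which is finite since $\sup w \le 1$ and $\sin x$ is bounded below by a positive multiple of $\dist(x,\{0,\pi\})$ near the sides combined with interior positivity. Then $w - a^*\sin x \ge 0$ and $b^*\sin x - w \ge 0$ are both nonnegative; one of them, say $w - a^*\sin x$, satisfies $\lap(w - a^*\sin x) + (w - a^*\sin x) \ge 0$ on the open set where $w > 0$ (and is $\ge 0$ elsewhere), vanishes on the lateral sides, and is bounded. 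If it is not identically zero it has a positive interior value; the strong maximum principle (applied in the open half-strips $\{y>0\}$, $\{y<0\}$ where $w$ solves the equation classically on its positivity set, noting $a^*\sin x>0$ there so the set $\{w > 0\}$ contains a neighborhood of any point where $w - a^*\sin x > 0$) then propagates positivity, and a barrier argument at $y = +\infty$ — using that $e^{-\sqrt{k^2-1}\,y}\sin(kx)$ decay and the boundedness — shows $w - a^*\sin x$ cannot stay positive and bounded, forcing it to be a positive multiple of $\sin x$ plus a bounded harmonic-type remainder that must vanish; combined with the definition of $a^*$ this yields $w \equiv a^*\sin x$ after all.

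The main obstacle is the interplay between the \emph{subsolution} hypothesis and the \emph{obstacle-type} boundary condition on part of $\{y=0\}$: one must be careful that the maximum principle and strong maximum principle are only invoked on open sets where $w$ is a genuine solution and stays positive, and that the competitor $a^*\sin x$ is strictly positive in the interior so that $\{w - a^*\sin x > 0\} \subset \{w > 0\}$, legitimizing the PDE there. Once that bookkeeping is set up, the Fourier/ODE dichotomy ($f_1$ affine hence constant, $f_k$ for $k\ge2$ exponentially growing unless zero) does all the real work, and the normalization $\int_0^\pi \bar v(x,1)\sin x\,dx = \pi/2$ (available in the application) pins down the constant to be $1$, giving $w = \sin x$.
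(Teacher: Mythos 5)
Your Fourier route reads the hypothesis as the equation $\lap w + w = 0$ away from $\{y=0\}$, but the lemma only assumes the one-sided inequality $\lap w + w \ge 0$, and only on the open set $\{w>0\}$; off that set nothing is assumed. The lemma is stated this way precisely so it can be applied to the even reflection of $\bar v$, which is a \emph{super}solution across $\{y=0\}\cap\{\bar v=0\}$ (the reflection produces a concave kink there), so a proof that upgrades the inequality to an equation on $\{y\neq 0\}$ is not available. Concretely, the Fourier coefficients of a mere subsolution do not satisfy $f_k'' + (1-k^2)f_k = 0$; testing the distributional inequality against $\sin(kx)\psi(y)$ requires $\psi\ge 0$ and a sign on $\sin(kx)$, which fails for $k\ge 2$. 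So the dichotomy that you say ``does all the real work'' is not there to be used. The sliding argument has a separate defect: you set $a^* := \sup\{a\ge 0 : w\ge a\sin x\}$ and treat it as positive, but $w$ has no lower bound under the hypotheses (the constraint $0<\sup w\le 1$ is one-sided and the PDE only sees $\{w>0\}$), so generically $a^*=0$ and the inference ``$a^*=0\Rightarrow w\le 0$'' is false. The controllable object is $w^+$, which is a weak subsolution everywhere and is bounded \emph{above} by $C\sin x$; there is no companion lower comparison.

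The missing idea is to make the comparison constant a function of $y$ and exploit convexity. The paper sets $A(y):=\sup_{x} w^+(x,y)/\sin x$, which is finite by the barrier. Comparing in a slab $[0,\pi]\times[y_1,y_2]$ against $(a+by)\sin x$ — which solves the equation, vanishes on the lateral sides, and dominates $w^+$ on the two horizontal sides when $a+by_i=A(y_i)$ — the maximum principle (applied on $\{w>0\}$) yields $A((1-t)y_1+ty_2)\le (1-t)A(y_1)+tA(y_2)$. Thus $A$ is convex and bounded on $\R$, hence constant $\equiv a>0$. Then $a\sin x - w^+\ge 0$ is a nonnegative weak supersolution: if it vanishes at an interior point, the strong maximum principle gives $w^+ = a\sin x$, hence $w=a\sin x$ (since $a\sin x>0$ in the interior); otherwise $a\sin x-w^+\ge\delta>0$ on a compact slab, and a Hopf-type barrier propagates this to $w^+\le (a-c\delta)\sin x$ near $y=0$, contradicting $A(0)=a$. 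This convexity-of-$A$ step is what replaces your Fourier-mode ODE analysis by a maximum-principle mechanism that tolerates the subsolution-only, obstacle-compatible hypothesis, and it is the step your proposal does not supply.
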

\begin{proof}
By a standard barrier argument we can find a universal number $C$ such that
\[
 w\le C\sin (x)\quad \text{ for all }\quad x,y\in [0,\pi]\times \R.
\]
%
In other words, the nonnegative function
\[
A(y):=\sup_{x\in[0,\pi]}\frac{w^+(x,y)}{ \sin{x} },\quad y\in\R,
\]
is bounded above by a universal constant $C$.
By the maximum principle in $[0,\pi]\times[y_1,y_2]$ we find that \[
w(x,(1-t)y_1+ty_2)\le \big[(1-t)A(y_1)+tA(y_2)\big] \sin{x} , 
\]
for all $x\in[0,\pi]$ and $t\in[0,1]$, indeed functions of the form $(a+b y) \sin{x} $ solve the equation. 

This shows that the function $A$ is convex and bounded above, so $A(y)\equiv a>0.$

Now --- by the strong maximum principle --- if the weak supersolution $a  \sin{x} -w^+$ vanishes somewhere in the strip, then $w^+=w=a\, \sin{x} $ (here we use $a>0$). Otherwise, if
\[
a \sin{x} -w^+\ge \delta>0 \quad \text{in, say,}\quad [\tfrac\pi3,\tfrac{2\pi}{3}]\times [-3,3], 
\]
then --- using the Hopf Lemma barrier --- we have that 
\[
w^+-a \sin{x} \ge c(n)\delta \sin{x}  \quad \text{in}\quad \text [0,\pi]\times [-1,1], 
\]
this means that, by definition, $A(0)\ge a+c(n)\delta$ which is impossible since $A(0)=a$ as well.
\end{proof}

\section{A variant of the construction}\label{sec:variant}
In the construction that we presented in \Cref{sec:existence}, taking $v$ to be the {\it first} eigenfunction was convenient, but not necessary. We present a variation of the argument, this time taking the $k$-th eigenfunction with $k\lesssim m$. The key observation is the following result concerning nodal domains in thin tubes (the sets $\Omega_{m,\sigma}$ are the same as above). 
\begin{proposition}\label{prop:nodal}
Let $m\in\N$ be any integer and $\sigma\in[0,1]$. Assume $v$ solves $\lap_\S v+\mu(\mu+1) v=0$ in $\Omega_{m,\sigma}$, for some
\[m\le\mu\le (1+\gamma)\cdot m.\] 
Then the eigenvalue $\mu(\mu+1)$ is simple and $v$ does not change sign in $[0,\pi/m]\times [0,1/m]$.
The small constant $\gamma>0$ does not depend on $m$ nor $\sigma$.
\end{proposition}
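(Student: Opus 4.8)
The plan is to argue by compactness and rescaling, exactly in the spirit of Proposition~\ref{prop:blowup} and Lemma~\ref{lem:blowupV}. Suppose the statement fails: there are sequences $m_j\to\infty$, $\sigma_j\in[0,1]$ and solutions $v_j$ of $\lap_\S v_j+\mu_j(\mu_j+1)v_j=0$ in $\Omega_{m_j,\sigma_j}$ with $m_j\le\mu_j\le(1+\gamma_j)m_j$ and $\gamma_j\to 0$, such that either the eigenvalue is not simple or $v_j$ changes sign in the box $[0,\pi/m_j]\times[0,1/m_j]$. (The case $m$ bounded should be handled separately and is essentially the classical fact that the operator is a small perturbation; or one notes the box $[0,\pi/m]\times[0,1/m]$ is small and the coefficients are smooth there, so a direct argument applies. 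I would state the uniform-in-$m$ claim and reduce to $m\to\infty$.) Perform the $1/m$-rescaling $x=m\theta$, $y=m\varphi$, so that $\tilde L_{\mu_j}$ from \eqref{eq:Ltilde} converges to $\lap+1$ with uniform coefficient bounds on compact sets of $[0,\pi]\times\R$; after normalizing $v_j$ appropriately (e.g.\ $\sup=1$ on a fixed compact piece) and using elliptic estimates together with the barrier $\sin x\cos y$ to get local bounds, extract a subsequential limit $\tilde v_j\to\bar v$ in $C^1_{\loc}$.

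The key point to extract from the rescaling is what happens to the removed slit $\Sigma_{m,\sigma}$: in the $x$-variable it occupies $\{|x|\le \sigma\pi\}\cup\{|x-\pi|\le(\text{something})\}$ within the fundamental box — more precisely the rescaled domain is $(0,\pi)\times\R$ minus the segment $[0,\sigma\pi]\times\{0\}$ together with the two lateral walls $\{0,\pi\}\times\R$ (coming from the $\theta=0$ and $\theta=\pi/m$ boundaries, on which the even/odd reflection makes $v$ have a zero-Neumann or zero-Dirichlet type condition — here the reflection in $\theta$ makes $\partial_\theta$-type conditions, so the limit should satisfy $\bar v=0$ on $\{0,\pi\}\times\R$ after the odd reflection that was used to build $u$, but for $v$ itself it is the first Dirichlet eigenfunction of $\Omega$, which vanishes on $\theta=0,\pi/m$). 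So $\bar v$ is a bounded solution of $\lap\bar v+\bar v=0$ in the strip $(0,\pi)\times\R$, vanishing on the two sides and also on the slit $[0,\sigma_\infty\pi]\times\{0\}$ (if $\sigma_j\to\sigma_\infty$). If $v_j$ changes sign in $[0,\pi/m_j]\times[0,1/m_j]$, then $\bar v$ changes sign in $[0,\pi]\times[0,1]$ — or, if the sign change degenerates, one must be slightly careful and instead track a nodal curve; I would quantify the sign change via a normalized quantity that survives the limit, such as $\sup_{[0,\pi]\times[0,1/m]}v_j / \sup_{[0,\pi]\times[0,2/m]}v_j$ bounded away from the value forced by a one-signed function. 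In the simple-eigenvalue case, a second eigenfunction orthogonal to the first would likewise produce a limit.

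Now invoke Lemma~\ref{lem:uniqueness}: a bounded function on the strip vanishing on both sides and solving $\lap w+w=0$ in $\{w>0\}$ is a multiple of $\sin x$; applied to $\bar v^+$ (and to $\bar v^-$, which also satisfies the subsolution inequality where positive) this forces $\bar v$ to be one-signed — indeed if $\bar v$ took both signs, $\bar v^+$ and $\bar v^-$ would each be positive multiples of $\sin x$ on disjoint open sets, impossible since $\sin x>0$ throughout $(0,\pi)$. Hence $\bar v$ does not change sign, and moreover $\bar v=\pm\sin x$, which in particular cannot vanish on a slit $[0,\sigma_\infty\pi]\times\{0\}$ of positive length unless $\sigma_\infty=0$; this already contradicts any persistent sign change and, by non-degeneracy of $\sin x$ near $y=0$, contradicts a sign change in the shrinking box as well. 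For the simple-eigenvalue assertion, orthogonality of two normalized eigenfunctions is not preserved naively under the non-compact rescaling, so instead I would argue directly: both would be forced to equal $\pm\sin x$ in the limit near the equator, and then a boundary-Harnack / maximum-principle argument (as in Lemma~\ref{lem:bh}) propagates the comparison $c\,v_1\le v_2\le C v_1$ up the tube, contradicting orthogonality for $j$ large.

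The main obstacle I expect is the bookkeeping of the limit domain and boundary conditions under rescaling — in particular confirming that the slit of length $\sigma\pi/m$ rescales to a fixed-length slit (so $\sigma_\infty$ may be positive) and that the lateral boundaries $\theta=0,\pi/m$ contribute Dirichlet conditions in the limit — and the handling of the simple-eigenvalue claim, where the lack of compactness of the strip means one cannot pass orthogonality to the limit directly and must instead run a separate boundary-Harnack argument along the tube. A secondary technical point is ruling out the degenerate possibility that the sign change of $v_j$ escapes to $y=\infty$ or collapses, which is why the argument should be phrased in terms of a scale-invariant quantity that is manifestly preserved in the limit.
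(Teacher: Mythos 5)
Your proposal identifies the right broad template (rescale by $1/m$, pass to a limit on the strip, invoke \Cref{lem:uniqueness}), and you honestly flag the two sticking points — localizing the sign change and surviving the non-compact limit — but those are precisely where the paper has to do real work, and your plan does not contain the ideas that close them. Three concrete gaps:

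\textbf{(i) Escape to the poles is not ruled out.} Your normalization ``$\sup=1$ on a fixed compact piece of the strip'' could produce a trivial limit: nothing in your argument prevents the bulk of $v^+$ or $v^-$, or the nodal line, from living near $\varphi=\pm\pi/2$, where the rescaled operator is far from $\lap+1$ and all your estimates degenerate. The paper's Claim~1 proves this cannot happen, via the barrier $g_d=d^{-\beta}\rho^\beta\sin(m\theta)$ with $\beta=m/2$, which gives exponential decay of $v$ in the wedge $\{\rho<d\}$ and forces any max of $M^\pm$ on a nodal component to sit at $\cos\varphi_0\ge\tfrac18$. You need something of this kind before any blow-up can be started.

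\textbf{(ii) The blow-up limit is in general unbounded, so \Cref{lem:uniqueness} does not apply directly.} For $\sigma$ bounded away from $0$ (which the proposition allows: $\sigma\in[0,1]$), the rescaled eigenfunction $\tilde v$ converges to $y\sin x$, not $\sin x$, and this is \emph{unbounded} on the strip — your claim that ``$\bar v$ is a bounded solution'' has no justification and is generally false. The paper sidesteps this by blowing up at the location $\varphi^+$ where $M^+$ is maximal on its nodal component (a self-normalizing choice), and then uses Claim~2 ($M^-(\varphi_0)\le C\|\hat v^+\|_{L^\infty}$, proved by a boundary-Harnack/Hopf comparison of the two sign components) to control $v^-$ by the local scale of $v^+$. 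These two ingredients make the limit bounded and are absent from your plan; they are not a ``secondary technical point.''

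\textbf{(iii) ``One-signed in the limit'' does not give ``one-signed for large $m$.''} Even if $\bar v=\pm\sin x$, the sign change of $v_j$ could vanish in magnitude, and you would need a quantitative statement to transfer positivity back. The paper does this via $C^{1,\alpha}$ boundary estimates on the blow-up: once $\bar w=\sin x$ is established, $w\ge\tfrac12\sin x$ on the boundary of a fixed box, whence by the maximum principle (and evenness in $\varphi$) $w>0$ inside, contradicting the presence of $J^-$. Your proposed ``scale-invariant quantity'' is a placeholder, not an argument.

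Finally, the simplicity part: the paper dispatches it in one line by unique continuation (if $v$ cannot change sign in the box, a combination $v_1-tv_2$ vanishing at an interior point vanishes identically), which is much cleaner than the boundary-Harnack-along-the-tube comparison you sketch, and the latter in any case presupposes a sign for each eigenfunction that you do not yet have.
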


This general principle is known, see for example the survey \cite{griesertubes} and the references therein. We give a self-contained proof at the end of this section. 

We show now that the construction carries out essentially as in the case $k=1$.

Fix any $m$ and $k$, with
$$m\text{ odd}\quad \text{and}\quad 1\le k\le 1 + \tfrac\gamma2 m.$$ 
For each $\sigma \in [0,1]$ pick the $k$-th eigenfunction $v_{k,\sigma}$, with homogeneity $\mu_k=\mu_k(\sigma)$.

For $\sigma=0$ we have
$$v_{k,0}=\rho^m \sin(m \theta) P_{2k-2}(\rho,z), \quad \quad \mu_k=m+ 2k-2,$$
with $P_{2(k-1)}$ a homogenous polynomial of degree $2(k-1)$ in $\rho$ and $z$, even in $z$.

For $\sigma=1$ we have
$$v_{k,1}=\rho^m \sin(m \theta) P_{2k-1}(\rho,z), \quad \quad \mu_k=m+ 2k-1,$$
with $P_{2k-1}$ a homogenous polynomial of degree $2k-1$ in $\rho$ and $z$, odd in $z$.

Thus, by the min-max formula:
\begin{equation}\label{eq:muk}
\begin{split}
\mu_k(\sigma)&\le \mu_k(1)= m+2k-1\le (1+\gamma) m,\\
\mu_k(\sigma)&\ge\mu_k(0)=m+2k-2,
\end{split}
\end{equation}
so we may apply \Cref{prop:nodal} and assume that $v_{k,\sigma} \ge 0$ on the equator and $v_{k,\sigma}$ is simple. Thus we can arrange $\sigma\mapsto v_{k,\sigma}$ to be continuous.

Construct $u_k$ from $v_k$ as before, so that 
$$ \partial_{\theta} u_k=v_k, \qquad  u_k(0,0)=0, \qquad \partial _\varphi u_k(0, {\tfrac{\pi}{m}})=0.$$
The $u_k$ is uniquely determined by these conditions, and we obtain as before
$$L_{\mu_k} u_k = c_k(\sigma) \delta_N,$$
with
\begin{equation}\label{eq:quantityk}
\frac{c_k(\sigma)}{2 m}=p_{\mu_k}(0) \int_0^{\pi/m} \p_\varphi v(s,0+) s\,ds + p_{\mu_k}'(0) \int_{0}^{\pi/m} v_k(s,0) (\tfrac{\pi}{m}-s )\, ds.
\end{equation}
Notice that, since $v_k\ge 0$ around the equator, $u_k$ satisfies the correct sign properties 
\begin{itemize}
    \item $u_k = 0$ and $\p_\varphi u_k\le0$ on the segment $[0,\sigma\pi/m]\times \{0\}$,
    \item $u_k \ge 0$ and $\p_\varphi u_k =0$ on the remaining part of the equator $[\tfrac{\sigma\pi}{m},\tfrac{\pi}{m}]\times\{0\},$
    \item $\p_\theta u_k  =0$ on the boundaries $\theta=0$ and $\theta=\pi/m$.
\end{itemize}
Now it is immediate to check that also in this case
\begin{equation}\label{eq:signsk}
c_k(0) \cdot c_k(1) <0, \end{equation}
which implies existence of a solution. Indeed, in the expression \eqref{eq:quantityk}, at the endpoints $\sigma=0,1$ the following happens:
\begin{itemize}
\item exactly one of the integrals involving $v$ vanishes (because $v$ is even or odd in $\varphi$) while the other integral is positive,
\item the coefficient of the integral that does not vanish flips sign, passing from $p_{\mu_k(0)}'(0)$ to $p_{\mu_k(1)}(0)$. When $m$ is odd these numbers have indeed opposite signs independently from $k$, by \Cref{prop:signs}. Indeed $\mu_k(0)$ has the same parity as $m$, while $\mu_k(1)$ has the same parity as $m+1$.
\end{itemize}

We are left with proving \Cref{prop:nodal}.

\begin{proof}[Proof of \Cref{prop:nodal}] 
It is enough to show that $v$ does not change sign in $[0,\pi/m]\times [0,1/m]$, since from this it follows that $\mu(\mu+1)$ is simple, by unique continuation.

We argue by compactness, letting in the rest of the proof
\begin{equation}\label{eq:11}
\gamma \to 0^+\quad\text{and}\quad m\to+\infty.
\end{equation}
We call universal constants that are uniform with respect to \eqref{eq:11} and $\sigma\in[0,1]$.

Consider the functions
\[
M^\pm(\varphi):=\max_{0\le \theta\le\pi/m} v^\pm(\theta,\varphi),\qquad \varphi\in (-\tfrac\pi2,\tfrac\pi2),
\]
where we extended $v$ evenly in the $\varphi$ variable.

\textbf{Claim 1.} 
Let $J\subset (-\tfrac\pi2,\tfrac\pi2)$ be an open connected component of $\{M^+>0\},$ and let $\varphi_0\in J$ be a maximum point for $M^+$ in $\overline J$. Then for $m$ large and $\gamma$ small,
\[
\cos\varphi_0\ge \tfrac18.\]
\textit{Proof of Claim 1.} 
For $d\in(0,1)$ to be fixed define the wedge
\[
W_{d}:=\{0<\theta<\tfrac\pi m\, , \rho<d\},
\]
recall that $\rho=\cos\varphi$. For $\alpha>0$ to be chosen define the barrier
\[
g_d:=  d^{-\beta}\rho^\beta\sin(m\theta),
\]
using $L_\mu(\rho^\beta)=\beta^2\rho^{\beta-2}+(\lambda_\mu-\lambda_\beta)\rho^\beta,$ one finds
\[
L_\mu g_d =d^{-\beta}\big\{\beta^2-m^2 +(\lambda_\mu-\lambda_\beta)\rho^2\big\}\rho^{\beta-2}\sin(m\theta).
\]
Thus, choosing $\beta:=m/2$ and any $d\le \tfrac14$, we have 
\begin{align*}
\beta^2+\lambda_\beta\rho^2 &\le \tfrac14 {m^2}\big(1+(1+o(1))d^2\big)\\
m^2-\lambda_\mu\rho^2 &\ge {m^2}\big(1-(1+o(1))d^2\big)
\end{align*}
where $o(1)$ is uniform in \eqref{eq:11}. 
This proves that
\[
g_d> 0\quad\text{and}\quad L_\mu g_d <0\quad\text{in}\quad W_{d},
\]
and that the maximum principle for $L_\mu$ holds in $W_{d}$.
The size of $g_d$ on different latitudes is
\[
\begin{cases}
g_d=3\sin(m\theta) &\text{ on }[0,\tfrac\pi m]\times \{\rho = d\},\\
g_d\le 2^{-m/2} &\text{ on }[0,\tfrac\pi m]\times \{\rho = \tfrac d2\}.
\end{cases}
\]
Let us assume by contradiction $\cos\varphi_0\le \tfrac 18$, so that we can choose $d$ as follows: 
\[
\rho_0:=\cos\varphi_0,\quad 2\rho_0=:\cos\varphi_1,\quad d:=2\rho_0\le \tfrac14.
\]
Along the edge $[0,\tfrac\pi m]\times\{\rho = \cos\varphi_1\}$ we have
\[
v\le C\, M^+(\varphi_1) \, g_{d},
\]
for some $C$ universal.
So, by the maximum principle in $W_d$, we have at $\rho=d$:
\[
M^+(\varphi_0)\le C\, 2^{-m/2} M^+(\varphi_1),
\]
which contradicts for $m$ large the maximality of $M^+(\varphi_0)$, unless $\varphi_1\notin\overline J$. But in this case a multiple of $g_d$ could touch from above $v^+\cdot\X_{J}(\varphi)$ in the interior of $W_d$, contradicting $L_\mu g_d<0$.

This concludes the proof of Claim 1.

Claim 1 ensures that the rescaled functions
\begin{equation}\label{eq:hatvdef}
\hat v(x,y):=v(x/m,\varphi_0 +y/m)
\end{equation}
solve a perturbation of $\lap \hat v +\hat v=0$, namely 
\[
\hat L \hat v := a_{ij} \hat v_{ij} + b_i \hat v_i + c \hat v =0,
\]
with 
\[
|a_{ij}-\delta_{ij}|+ |b_i|\le C_\ell/m^2\quad\text{and}\quad |c-1|\le C(\gamma +1/m),
\]
in $[0,\pi]\times(-\ell,\ell)$ for all $\ell>0$.

Let us denote with $\delta_0$ the number such that $\cos\varphi_0\ge\tfrac18$ if and only if $|\varphi|\le \tfrac\pi2-\delta_0$.

%

\textbf{Claim 2.} Let $\varphi_0\in(\tfrac2m,\tfrac\pi2-\delta_0)$ be any point such that $M^+(\varphi_0)>0.$ Then 
\begin{equation}\label{eq:ovidiusbound}
M^-(\varphi_0)\le C \|\hat v^+\|_{L^\infty([0,\pi]\times[-1,1])},
\end{equation}
where $\hat v$ is defined as in \eqref{eq:hatvdef}.

\textit{Proof of Claim 2.}  
Take the ball $B_r(z)$ centered at the point $z$ where the maximum of $v^-$ is realized on the segment $[0,\pi/m] \times \{\varphi_0\}$, so that $B_r(z)$ is tangent to the support of $v^+$ (which must intersect this segment as well). Notice that $r \le \pi/m$ by hypotheses and that we may assume $v^-(z)=1$. Then we need to show 
\begin{equation}\label{eq:w-}
\max_{B_{2r}(z)} v \ge  \delta_1,
\end{equation}
with $\delta_1>0$ small universal. 

First, by the boundary Harnack principle for $\hat L$, there is $\kappa >0$ small universal such that $B_{2\kappa r}(z)\subset \Omega$. So by the Harnack inequality
\[
v^-\ge c_0\quad\text{ in } \quad B_{\kappa r}(z)\subset \Omega,
\]
for some universal $c_0>0$.
We take the barrier $\psi\in C\big((\overline{B}_{2r}(z)\setminus B_{\kappa r }(z))\cap \overline{\Omega}\big)$ solving
\begin{equation*}
\begin{cases}
    L_\mu \psi=0& \text{ in }\big(B_{2r}(z)\setminus B_{\kappa r}\big)\cap\Omega,\\
     \psi =0& \text{ on }\big(B_{2r}(z)\setminus B_{\kappa r}\big)\cap\p\Omega,\\
    \psi =\delta_1&\text{ on }\p B_{2r}(z)\cap\Omega,\\
\psi =-c_0&\text{ on }\p B_{\kappa r}(z)\cap\Omega.
\end{cases}
\end{equation*}
By compactness and boundary gradient estimates, for $\delta_1>0$ universally small, we have that
\[
\psi<0\quad\text{ in } \quad \big( B_{3r/2}(z)\setminus B_{\kappa r}(z)\big)\cap \Omega.
\]
If \eqref{eq:w-} does not hold, then $v\le\psi$ since the maximum principle applies. But then this violates the definition of $r$, since we could have taken $3r/2$.
%
%
This concludes the proof of Claim 2.

We continue with the proof of \Cref{prop:nodal}. Notice that, by the maximum principle, if 
\begin{equation}\label{eq:lk}
M^+(\tfrac1m)\cdot M^-(\tfrac1m)=0,
\end{equation}
then 
\[
\text{$v$ cannot change sign in $[0,\tfrac\pi m]\times[-\tfrac1m,\tfrac1m]$},
\]
and we would conclude. 

So let us assume \eqref{eq:lk} does not hold, and take $J^+$ and $J^{-}$ to be the open connected components of $\{M^+>0\}$ and $\{M^-<0\}$ that contain $\tfrac1m$. Let $\varphi^+$ and $\varphi^-$ be the respective maximum points for $M^+$ and $M^-$ in these intervals. We may assume by symmetry
\[0\le\varphi^+\le \varphi^-.\] 
\textbf{Claim 3.} For $m$ large and $\gamma$ small,
\[
v\ge0 \quad\text{in}\quad [0,\pi/m]\times [\varphi^+-\tfrac2m,\varphi^++\tfrac2m].
\]
This leads to a contradiction since the interval $J^-$ must contain  the segment $[\tfrac1m,\varphi^-]$ by definition, thus also the segment $[\tfrac1m,\varphi^+]$, but we are claiming that cannot contain the segment $[\varphi^+-\tfrac2m,\varphi^+].$

\textit{Proof of Claim 3.} Consider the rescaled functions
\begin{align*}
w(x,y) &:= \frac{v(x/m, \varphi^++y/m)}{M^+(\varphi^+)} \cdot \X_{J^+}(\varphi^++y/m).
\end{align*}
Let $I\subset \R$ be the open interval such that
\[
m\cdot(J^+-\varphi^+)\to I,
\]
in the sense of pointwise convergence of indicator functions. We claim that necessarily $I=\R$. Indeed, if $I\subset (a,+\infty)$, a universal multiple of the function
\[
\sin x \cdot \cos\big(\frac\pi2 \frac{y-L}{a-L}\big)
\]
would lie above $w$ in $[0,\pi]\times[-a,L]$. For $L$ large enough in terms of $a$, this contradicts the fact that $w(\bar x,0)=1,$ for some $\bar x\in[0,\pi]$. A symmetric reasoning excludes $I\subset (-\infty,b).$

By Claim 2 and the $C^\alpha$ estimate \eqref{eq:holderbound}, we have
\[
w\to \bar w
\]
locally uniformly in $[0,\pi]\times \R$. This is clear in the case $m\varphi^+\to+\infty$, while, if $m\varphi^+ \to -y_1$ then one can use the maximum principle in $[0,\pi]\times[y_1-1,y_1+1],$ to prove that $|w|$ is universally bounded. 
Since $\lap \bar w +\bar w=0$ in the nonempty set $\{\bar w>0\}$, and $\sup\bar w= 1$ by definition, \Cref{lem:uniqueness} forces
\[
\bar w = \sin{x}.
\]

Now, if we had $\varphi^+\le3/m,$ $C^{1,\alpha}$ boundary estimates would give
\[
\nabla w \to \nabla \sin{x} \qquad \text{ uniformly in }[0,\pi]\times[4,5],
\]
forcing $w\ge \tfrac12 \sin{x} $ in $[0,\pi]\times\{\pm 4\}$ for $m$ and $1/\gamma$ large enough. Using that $v$ is even and the maximum principle, we would find that $w>0$ in $[0,\pi]\times[-4,+4]$, thus $M^-(1/m)=0,$ which contradicts our assumptions.

Similarly, if $\varphi^+\ge 3/m$, we use $C^{1,\alpha}$ boundary estimates in the box $[0,\pi]\times [-2,2]$. As before, these give $w\ge\tfrac12 \sin{x}\ge0 $ in that box for $m$ and $1/\gamma$ large enough, thus concluding the proof of Claim 3. 
\end{proof}
\begin{remark}
If for any $\sigma\in(0,1)$, any $m\in\N$ (possibly even) and any $k\in \N$, it happens that the $k$-th eigenvalue of $-\lap_\S$ in $\Omega_{m,\sigma}$ is {\it not } simple, then our method still gives a solution for the unsigned thin obstacle problem. Indeed, taking $v$ as an appropriate linear combination of two eigenfunctions, one can arrange $c_k(\sigma)=0$.
\end{remark}
\section{Legendre Functions}\label{sec:legendre}
We will need with some bounds on 
\[
L_\mu h =h'' -(n-2) \tan\varphi\cdot h' + \mu(\mu+n-2) h,
\]
seen as an ODE with initial conditions on $\varphi =0$, rather than $\varphi = \tfrac\pi2$ (compare \eqref{eq:ODE} with \eqref{2}).
\begin{lemma}\label{lem:logbound}
Let $h,g\colon[0,\pi/2)\to\R$ solve 
\begin{equation}\label{eq:ode}
	L_\mu h =h'' -(n-2) \tan\varphi\cdot h' + \mu(\mu+n-2) h = (n-2)\rho^{-1}g.
\end{equation}
Then 
\begin{equation}\label{eq:hblowupsize}
        |h'(\varphi)|\le C |\cos\varphi|^{2-n}\quad\text{for all}\quad\varphi\in[0,\pi/2),
\end{equation}
where the constant $C$ depends on upper bounds on the quantities 
\begin{equation}\label{eq:quantities}
|h(0)|,\quad |h'(0)|,\quad \sup|g|,\quad n\quad\text{and}\quad \mu.
\end{equation}
Furthermore, if $g\equiv 0$ and $h(\zeta)=0$ for some $\zeta\le \delta\le c$, then
\begin{equation}\label{eq:capbound}
    |h'(\varphi)-h'(\zeta) \cos(\sqrt{\lambda_\mu}\varphi)|\le C\delta^2|h'(\zeta)|\quad  \text{for all}\quad\varphi\in[\zeta,\delta].
\end{equation}
Here, $c$ and $C$ depend only on $n$, while $\delta$ is a free parameter.
\end{lemma}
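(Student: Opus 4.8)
My plan is to pass to the Sturm--Liouville form of \eqref{eq:ode}. Since $\rho=\cos\varphi$ and $\frac{d}{d\varphi}\rho^{n-2}=-(n-2)\rho^{n-3}\sin\varphi$, multiplying \eqref{eq:ode} by $\rho^{n-2}$ turns it into $(\rho^{n-2}h')'+\lambda_\mu\rho^{n-2}h=(n-2)\rho^{n-3}g$. Writing $F:=\rho^{n-2}h'$ and $G:=\rho^{n-2}h$, this becomes the first order system
\[
F'=-\lambda_\mu G+(n-2)\rho^{n-3}g,\qquad G'=F-(n-2)\tan\varphi\,G,
\]
the second identity being just $h'=\rho^{2-n}F$ rewritten. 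Note that \eqref{eq:hblowupsize} is precisely the assertion that $F$ stays bounded on $[0,\tfrac\pi2)$.

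For \eqref{eq:hblowupsize} I would run an energy estimate on $\mathcal{E}:=F^2+\lambda_\mu G^2$. Differentiating and substituting the system, the $\lambda_\mu FG$ cross terms cancel, leaving
\[
\mathcal{E}'=2(n-2)\rho^{n-3}g\,F-2\lambda_\mu(n-2)\tan\varphi\,G^2\ \le\ 2(n-2)\rho^{n-3}\,|g|\,\sqrt{\mathcal{E}},
\]
because $\tan\varphi\ge0$ throughout $[0,\tfrac\pi2)$. For $n\ge3$ one has $\rho^{n-3}\le1$, hence $\frac{d}{d\varphi}\sqrt{\mathcal{E}}\le(n-2)\sup|g|$, so $\sqrt{\mathcal{E}}$ grows at most linearly and $\sqrt{\mathcal{E}(\varphi)}\le\sqrt{h'(0)^2+\lambda_\mu h(0)^2}+\tfrac{\pi}{2}(n-2)\sup|g|=:C$ (using $\rho(0)=1$); in particular $|F|\le C$, which is \eqref{eq:hblowupsize}. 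The case $n=2$ is immediate since then the right side of \eqref{eq:ode} vanishes, $F=h'$ and $\mathcal{E}$ is conserved. The one point requiring insight is the weight: the naive Sturm--Liouville energy $\rho^{n-2}((h')^2+\lambda_\mu h^2)$ is itself unbounded near the pole (the singular solution of the homogeneous equation has $|h'|\sim\rho^{2-n}$, which also shows \eqref{eq:hblowupsize} is sharp), whereas $\mathcal{E}=\rho^{2(n-2)}((h')^2+\lambda_\mu h^2)$ is conserved up to the sign-definite dissipation $-\tan\varphi\,G^2$.

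For \eqref{eq:capbound} ($g\equiv0$) I would first eliminate $G$ from the system to see that $F$ solves the conjugate equation $F''+(n-2)\tan\varphi\,F'+\lambda_\mu F=0$, and observe that $h(\zeta)=0$ forces $F'(\zeta)=-\lambda_\mu\rho(\zeta)^{n-2}h(\zeta)=0$. Thus on $[\zeta,\delta]$ the function $F$ is a small perturbation of the frozen oscillator $\widetilde F(\varphi):=F(\zeta)\cos\!\big(\sqrt{\lambda_\mu}\,(\varphi-\zeta)\big)$, which has the same Cauchy data at $\zeta$. One checks that the energy $E_F:=(F')^2/\lambda_\mu+F^2$ is nonincreasing on $[\zeta,\tfrac\pi2)$ (its derivative equals $-2(n-2)\tan\varphi\,(F')^2/\lambda_\mu\le0$), so $|F|\le|F(\zeta)|$ and $|F'|\le\sqrt{\lambda_\mu}\,|F(\zeta)|$ there, with constants not depending on $\mu$. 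Feeding this into Duhamel's formula
\[
F(\varphi)-\widetilde F(\varphi)=-(n-2)\int_\zeta^\varphi\frac{\sin\!\big(\sqrt{\lambda_\mu}\,(\varphi-s)\big)}{\sqrt{\lambda_\mu}}\,\tan s\;F'(s)\,ds,
\]
together with $\tan s\le 2\delta$ on $[\zeta,\delta]$ (for $\delta\le c=c(n)$ small), bounds the right side by $C_n\,\delta^2\,|F(\zeta)|$. Finally $\rho(\varphi)^{2-n}$ and $\rho(\zeta)^{n-2}$ are both $1+O_n(\delta^2)$ on $[0,\delta]$, so converting $F\leftrightarrow h'$ and $F(\zeta)\leftrightarrow h'(\zeta)$ costs only a further $O_n(\delta^2|h'(\zeta)|)$, which yields the bound of the form \eqref{eq:capbound}.

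The main obstacle in both parts is keeping every constant uniform in $\mu$. For \eqref{eq:hblowupsize} this is automatic once the correct weight $\rho^{2(n-2)}$ is identified, because the $\tan\varphi\,G^2$ dissipation has a favorable sign. For \eqref{eq:capbound} it rests on the monotonicity of $E_F$ on $[\zeta,\tfrac\pi2)$: that monotonicity is exactly what bounds $|F'|$ by $\sqrt{\lambda_\mu}\,|F(\zeta)|$, so the two factors of $\sqrt{\lambda_\mu}$ cancel in the Duhamel term and one is left with a clean $\mu$-independent $O(\delta^2)$.
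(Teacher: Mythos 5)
Your proof is correct, and it takes a genuinely different route from the paper. The paper works on $h$ directly: it writes the equation as $h''+\lambda_\mu h = (n-2)\tan\varphi\,h' + (n-2)\rho^{-1}g$, uses the Green function $G(\varphi)=\omega^{-1}\sin(\omega\varphi)\chi_{(0,\infty)}$ to obtain a Duhamel representation, and then proves \eqref{eq:hblowupsize} by a Gronwall iteration on $\int_0^\varphi\tan s\,|h'(s)|\,ds$ and proves \eqref{eq:capbound} by writing the integral equation as $(\mathrm{id}-\delta^2 T)\tilde h'=\cos(\omega\varphi)$ and inverting the small perturbation. You instead conjugate by the Sturm--Liouville weight, passing to $F=\rho^{n-2}h'$, $G=\rho^{n-2}h$. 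This buys you two things: first, the energy $\mathcal{E}=F^2+\lambda_\mu G^2$ has a \emph{sign-definite} dissipation, which kills the Gronwall step and yields \eqref{eq:hblowupsize} in one integration; second, the conjugate operator $F''+(n-2)\tan\varphi\,F'+\lambda_\mu F=0$ has a damping term of favorable sign, so the energy $E_F=(F')^2/\lambda_\mu+F^2$ is monotone and immediately gives the $\mu$-uniform bound $|F'|\le\sqrt{\lambda_\mu}\,|F(\zeta)|$, after which Duhamel gives $O(\delta^2)$ with no operator inversion needed. Your argument is arguably cleaner, and the observation that the $\tan\varphi$ damping flips sign under conjugation is a nice structural point not made in the paper.

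One small remark that applies to both proofs: what you actually establish (and what the paper's proof establishes as well, since it writes the representation formula in the shifted variable $\tilde h=h(\cdot+\zeta)$) is the bound with $\cos\!\big(\sqrt{\lambda_\mu}\,(\varphi-\zeta)\big)$ rather than $\cos(\sqrt{\lambda_\mu}\,\varphi)$ as in \eqref{eq:capbound}. These differ by $O(\sqrt{\lambda_\mu}\,\zeta)$, which is not $O(\delta^2)$ in general (indeed in the application of \Cref{prop:signs} one has $\zeta\sim 1/m$ and $\sqrt{\lambda_\mu}\sim m$, so the difference is $O(1)$). So \eqref{eq:capbound} as written is most likely a typo for the shifted version, and you should state the shifted version; the downstream uses in the proof of \Cref{prop:signsrep} (the bound $|p_\mu'(\varphi_0)|\le C|p_\mu'(\zeta)|$ and the $L^2$ estimate of $|\nabla p_\mu|$ on $\{\zeta\le\varphi\le\varphi_0\}$) go through unchanged with either form.
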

Before embarking in the proof lets us comment this statement. \Cref{eq:hblowupsize} shows that our functions $u$, built in \Cref{subsec:vhu}, may in general blow-up at most logarithmically around the north pole.

The second bound, \eqref{eq:capbound}, shows that the $p_\mu$'s are $C^1$-approximated by planar waves of frequency $\sqrt{\lambda_\mu}$ and {\it constant} amplitude, in a strip  around the equator. As $\varphi\to\tfrac\pi2$, this behaviour is lost (cf. \Cref{fig:pmu}). 
This will be used in the proof of \Cref{prop:signs}.

\begin{proof}[Proof of \Cref{lem:logbound}]
Set $\omega^2:=\mu(\mu+n-2),$ and assume without loss of generality $n\ge3$. The equation reads as
\[
h''(\varphi)+\omega^2 h(\varphi) = (n-2)\tan(\varphi) h' +\cos(\varphi)^{-1}g(\varphi).
\]
Consider the Green function
\[
   G(\varphi):=\omega^{-1}\sin(\omega \varphi) \X_{(0,\infty)}(\varphi),
    \]
    which solves $G''+\omega^2 G = \delta_0$. Setting 
    $$f(\varphi):= (n-2)[\tan(\varphi) h'(\varphi) +\cos(\varphi)^{-1}g(\varphi)
]\X_{(0,\pi/2)}$$ 
we can represent $h$ as
    \begin{equation}\label{eq:rep}
    h(\varphi)= h(0)\cos(\omega \varphi ) +h'(0){\omega^{-1}\sin(\omega \varphi)}{} +(G*f)(\varphi).
    \end{equation}
    We use this formula to prove \eqref{eq:hblowupsize} denoting by $C$ a constant depending on the quantities \eqref{eq:quantities}. Differentiating we get
\begin{align*}
|h'(\varphi)|\le C + |(G'*f)(\varphi)|\le C(1+|\log\cos\varphi|)+ (n-2)\int_0^\varphi\tan(s) |h'(s)|\,ds,
\end{align*}
which for $F(\varphi):=\int_0^\varphi\tan(s) |h'(s)|\,ds$ becomes
\[
\big(\cos^{n-2}(\varphi)F(\varphi)\big)'\le C\cos^{n-1}(\varphi)(1+|\log\cos\varphi|)\in L^1(0,\tfrac\pi2),\]
so integrating $(\cos\varphi)^{n-2} F(\varphi)\le C$ and plugging back
\[
|h'(\varphi)|\le C(1+\log\cos\varphi)+F(\varphi)\le {C}{\cos(\varphi)^{2-n}}.
\]
This proves \eqref{eq:hblowupsize}, we turn to the proof of \eqref{eq:capbound}.

Differentiating the representation formula \eqref{eq:rep} and normalizing $h'(\zeta)=1$ we get
\[
\tilde h'(\varphi) = \cos(\omega\varphi) + (n-2)\int_0^\varphi \cos(\omega(\varphi-s))\tan(s+\zeta) \tilde h'(s)\, ds,
\]
for $\tilde h=h(\cdot +\zeta)$ and $\varphi\in[0,\delta-\zeta]$. This equation can be written as
\[
(id -\delta^2 T) \tilde h' = \cos(\omega\varphi),
\]
for a linear operator $T$ in $L^\infty([0,\delta-\zeta])$ with norm bounded by $n-2$. 
Now for $\delta$ dimensionally small $id-\delta^2T$ is invertible and \eqref{eq:capbound} follows by
\[
\|\tilde h'-\cos(\omega\varphi)\| =\|\big((id-\delta^2 T)^{-1} -id\big) \cos(\omega\varphi)\|\le C\delta^2 \|\cos(\omega\varphi)\|=C\delta^2.
\] 
\end{proof}

We turn to the proof of \Cref{prop:signs} whose statement we repeat for the reader's convenience.
\setcounter{lemma}{2} 
\begin{proposition}[Legendre functions]\label{prop:signsrep}\,
\begin{itemize}
\item[(a)] The signs of $p_\mu(0)$ and $p_\mu'(0)$ are given by $\cos(\mu \frac \pi 2)$ and $\sin(\mu \frac \pi 2),$ respectively.
\item[(b)] If $m$ is odd and $\mu\in(m,m+1],$ then \begin{equation}\label{eq:6}
-p_\mu'(0)/p_\mu(0) \ge c m (m+1-\mu).
\end{equation}
\item[(c)] If $m$ is odd and $\mu\in[m+1-\delta,m+1],$ then
\begin{equation}\label{eq:7}
0\le -p_\mu'(0)/p_\mu(0)\le Cm (m+1-\mu).
\end{equation}
\end{itemize}
The constants $c$, $C$ and $\delta>0$ depend only on $n$.
\end{proposition}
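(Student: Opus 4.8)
The plan is to prove the sign statement (a) by an ODE‑symmetry argument, and the quantitative bounds (b)–(c) via a monotonicity identity for $\mu\mapsto -p_\mu'(0)/p_\mu(0)$ together with a WKB‑type description of $p_\mu$ near the equator. I treat $n\ge 3$ in detail; $n=1,2$ are elementary (for $n=2$ one has explicitly $p_\mu(\varphi)=\cos(\mu(\tfrac\pi2-\varphi))$, from which all three items follow by trigonometry).

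\emph{Part (a).} The ODE \eqref{2} is invariant under $\varphi\mapsto-\varphi$ (reflecting the symmetry of the sphere across the equator) and is regular at $\varphi=0$. Hence if $p_\mu(0)=0$ then $\varphi\mapsto-p_\mu(-\varphi)$ solves the same initial value problem at $0$, so $p_\mu$ is odd; likewise $p_\mu'(0)=0$ forces $p_\mu$ even. In either case the odd/even extension is bounded near the south pole (the second Frobenius solution there being unbounded), so $p_\mu$ extends to a rotationally symmetric, bounded solution of $\triangle_\S w+\lambda_\mu w=0$ on $\S^{n-1}\setminus\{N,S\}$, hence — points being removable — on all of $\S^{n-1}$; thus $\lambda_\mu=\mu(\mu+n-2)$ is a Laplace eigenvalue, forcing $\mu\in\Z_{\ge0}$, of even (resp. odd) parity. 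Conversely, at $\mu=k\in\Z_{\ge0}$ the function $p_k$ is the normalized Gegenbauer polynomial $C^{(\frac{n-2}2)}_k(\sin\varphi)/C^{(\frac{n-2}2)}_k(1)$, even or odd with $k$, so exactly one of $p_k(0),p_k'(0)$ is nonzero, and a direct recursion (or the classical evaluations of $C^{(\alpha)}_{2\ell}(0)$ and $(C^{(\alpha)}_{2\ell+1})'(0)$) shows its sign alternates as $(-1)^\ell$, i.e. agrees with $\cos(k\tfrac\pi2)$ resp. $\sin(k\tfrac\pi2)$. Since $p_\mu(0)$ vanishes exactly at odd integers and $p_\mu'(0)$ exactly at $\{0,2,4,\dots\}$, on each interval $(k,k+1)$ neither vanishes, so each keeps the sign it has at the adjacent integer endpoint — which is precisely the sign of $\cos(\mu\tfrac\pi2)$ resp. $\sin(\mu\tfrac\pi2)$.

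\emph{Reduction of (b)–(c).} Fix odd $m$; on $(m,m+1]$ there are no odd integers, so $p_\mu(0)\ne0$ and $R(\mu):=-p_\mu'(0)/p_\mu(0)$ is smooth with $R(m+1)=0$. Differentiating \eqref{2} in $\mu$ and integrating the cross‑Wronskian: with $w:=\dot p_\mu p_\mu'-p_\mu\dot p_\mu'$ one gets $\frac{d}{d\varphi}\big[(\cos\varphi)^{n-2}w\big]=(2\mu+n-2)(\cos\varphi)^{n-2}p_\mu^2$, and integrating over $[0,\tfrac\pi2]$ — the boundary term at $\tfrac\pi2$ vanishing since $(\cos\varphi)^{n-2}\to0$ and $w=O(\cos\varphi)$ there — gives $w(0)=-(2\mu+n-2)J(\mu)$ where $J(\mu):=\int_0^{\pi/2}(\cos\varphi)^{n-2}p_\mu^2\,d\varphi$. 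Hence
\[
R'(\mu)=\frac{w(0)}{p_\mu(0)^2}=-\frac{(2\mu+n-2)\,J(\mu)}{p_\mu(0)^2}<0,\qquad\text{so}\qquad R(\mu)=\int_\mu^{m+1}\frac{(2t+n-2)\,J(t)}{p_t(0)^2}\,dt .
\]
So (b)–(c) reduce to controlling $J(t)/p_t(0)^2$ on $(m,m+1]$ uniformly in odd $m$. The key is the two‑sided estimate $J(\mu)\asymp_n A(\mu)^2$ with $A(\mu)^2:=p_\mu(0)^2+\lambda_\mu^{-1}p_\mu'(0)^2$, valid for $\mu\in[m,m+1]$. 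Granting it, $J(t)/p_t(0)^2\ge J(t)/A(t)^2\ge c_n$ (as $p_t(0)^2\le A(t)^2$), and $2t+n-2\ge c_n m$, so $R(\mu)\ge c_n\,m(m+1-\mu)$ on all of $(m,m+1]$, which is (b).

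\emph{The crux $J\asymp_n A^2$, and (c).} The lower bound $J\gtrsim_n A^2$ is local at the equator: by the representation formula \eqref{eq:rep} and the Gronwall estimate behind \eqref{eq:capbound} in \Cref{lem:logbound}, on $[0,c_0]$ with $c_0=c_0(n)$ small one has $\|p_\mu-W_\mu\|_\infty\le C_n c_0^2 A$, where $W_\mu(\varphi)=p_\mu(0)\cos(\omega\varphi)+\omega^{-1}p_\mu'(0)\sin(\omega\varphi)$, $\omega=\sqrt{\lambda_\mu}$; since $\int_0^{c_0}W_\mu^2=\tfrac{c_0}2A^2(1+O(1/\omega))$ by averaging over the $\asymp c_0\omega$ periods, $\int_0^{c_0}p_\mu^2\ge c_nA^2$ for $c_0$ small and $m$ large, whence $J\ge c_nA^2$ (the finitely many small $m$ by continuity and positivity of $J$). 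For the upper bound, conjugate to $\psi:=(\cos\varphi)^{(n-2)/2}p_\mu$, solving $\psi''+Q\psi=0$ with $Q(\varphi)=\nu^2+\beta\tan^2\varphi$, $\nu^2=\lambda_\mu+\tfrac{n-2}2\asymp\lambda_\mu$, $\beta=\tfrac{(n-2)(4-n)}4$; note $(\cos\varphi)^{n-2}p_\mu^2=\psi^2$, $\psi(0)=p_\mu(0)$, $\psi'(0)=p_\mu'(0)$, and the WKB amplitude $G:=\psi^2+\psi'^2/Q$ satisfies $G'=-Q'\psi^2/Q^2$. For $2\le n\le4$ we have $Q'\ge0$ on $[0,\tfrac\pi2)$, so $G$ is nonincreasing, $\psi(\varphi)^2\le G(0)\asymp A^2$, and integrating gives $J\le C_nA^2$; for $n\ge5$, $Q$ decreases and vanishes at some $\varphi_\ast=\tfrac\pi2-O(1/\nu)$, and one combines the bound $\psi^2\le\mathcal F(0)/Q\lesssim A^2$ on $[0,\varphi_\ast-c/\nu]$ (where $\mathcal F:=Q\psi^2+\psi'^2$ is nonincreasing) with $|p_\mu|\le1$ on the $O(1/\nu)$‑wide layer near $\tfrac\pi2$ (itself from monotonicity of $E:=p_\mu^2+\lambda_\mu^{-1}p_\mu'^2$ and $E(\tfrac\pi2)=1$), whose contribution to $J$ is $\lesssim\nu^{-(n-1)}\lesssim A^2$. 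Finally, since $p_\mu(0)^2\le A(\mu)^2\le C_n p_\mu(0)^2(1+R(\mu)^2/\lambda_\mu)$, the displayed identity yields $R(\mu)\le C_n m\big[(m+1-\mu)+m^{-2}\int_\mu^{m+1}R^2\big]$; as $R(m+1)=0$, a continuity/bootstrap argument on $[m+1-\delta,m+1]$ with $\delta=\delta(n)$ small gives $R(\mu)\le C_n m(m+1-\mu)$, which is (c).

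The main obstacle is the uniform‑in‑$m$ upper bound $J\lesssim_n A^2$ — i.e. controlling the amplitude of $p_\mu$ away from the equator, equivalently the sharp fact that the weighted $L^2$‑mass of the normalized Legendre/Gegenbauer function is comparable to the square of its central value; it can alternatively be extracted from the explicit formulas \cite{stegun}, but the WKB route above keeps the argument self‑contained.
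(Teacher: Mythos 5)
Your proposal is correct, and it takes a genuinely different route from the paper on both halves. For part (a) the paper counts zeros via a Sturm oscillation argument (the number of zeros of $p_\mu$ in $[0,\tfrac\pi2]$ increases by one each time $\mu$ passes an integer), whereas you characterize $\{p_\mu(0)=0\}$ and $\{p_\mu'(0)=0\}$ exactly by a parity/removability argument: odd or even reflection across the equator yields a bounded rotationally invariant eigenfunction on all of $\S^{n-1}$, forcing $\mu$ to be an integer of matching parity, after which the sign on each interval $(k,k+1)$ follows by continuity from the adjacent integer and the classical evaluation of Gegenbauer polynomials at $0$. For (b)--(c), the paper tracks the first zero $\zeta(\mu)$ of $p_\mu$, derives $\zeta'(\mu)$ from the variation equation, proves $c\le m\zeta'(\mu)\le C$ via an energy-dissipation identity (integrating $\cot\varphi\,E'$) and then transfers this to $-p_\mu'(0)/p_\mu(0)$ using the uniform $C^1$ approximation of \Cref{lem:logbound}; you instead differentiate $R(\mu)=-p_\mu'(0)/p_\mu(0)$ directly via the cross-Wronskian $w=\dot p_\mu p_\mu'-p_\mu\dot p_\mu'$, obtaining the exact and strictly negative monotonicity formula $R'(\mu)=-(2\mu+n-2)J(\mu)/p_\mu(0)^2$ with $J(\mu)=\int_0^{\pi/2}(\cos\varphi)^{n-2}p_\mu^2$, and then prove $J(\mu)\asymp_n p_\mu(0)^2+\lambda_\mu^{-1}p_\mu'(0)^2$ via the Liouville/WKB conjugation $\psi=(\cos\varphi)^{(n-2)/2}p_\mu$ and monotonicity of the amplitude $G=\psi^2+\psi'^2/Q$ (with the extra layer near the pole when $n\ge5$), closing (c) by a Riccati-type bootstrap from $R(m+1)=0$. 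Your route is arguably a bit more direct in that it bypasses the detour through $\zeta(\mu)$ and the smooth-dependence arguments, giving the exact identity $R(\mu)=\int_\mu^{m+1}(2t+n-2)J(t)/p_t(0)^2\,dt$ at the outset, while the paper's argument is more self-contained with respect to special-function input (it does not invoke Gegenbauer evaluations or the conjugated potential $Q$). Both need a uniform-in-$m$ comparison of an $L^2$-type quantity with a pointwise quantity, and both obtain it with similar effort. The one place that merits a remark: your appeal to ``the Gronwall estimate behind \eqref{eq:capbound}'' is used for $p_\mu$ with non-vanishing Cauchy data at $0$, which is not literally what \eqref{eq:capbound} covers (it assumes $h(\zeta)=0$); the representation \eqref{eq:rep} plus the same Gronwall iteration does give $\|p_\mu-W_\mu\|_{L^\infty[0,c_0]}\le Cc_0^2 A$, but this should be stated as an adaptation rather than a citation of \eqref{eq:capbound}.
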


\begin{proof}[Proof of \Cref{prop:signsrep}]
We prove (a).

The form of the ODE implies that $p_\mu$ oscillates, and the zeros of $p_\mu$ and $p_\mu'$ are simple and intertwined. Indeed, near a critical point where $p_\mu$ is positive (negative) we know that $p_\mu$ is concave (convex), therefore $p_\mu$ must change sign between two consecutive critical points. 

Moreover, the number of zeros of $p_\mu$ in the interval $[0, \frac \pi 2]$ can only increase as we increase $\mu$. The reason is that between any two consecutive zeros of $p_\mu$ there must be at least a zero of $p_{\mu'}$ if $\mu' > \mu $. Otherwise in such and interval a positive multiple of $p_{\mu'}$ can be touched by below by a positive multiple of $p_\mu$ and we contradict the strong maximum principle at the contact point since $\lambda_{\mu'} > \lambda_\mu$. Finally, using the monotonicity of the first eigenvalue of a spherical cap, $p_{\mu'}$ must vanish somewhere after the first zero of $p_\mu$ (i.e., closer to $\tfrac\pi2$) .

Notice also that $p_\mu(0)=0$ precisely when $\mu$ is an odd integer, and $p_\mu'(0)=0$ precisely when $\mu$ is an even integer. This is because they correspond to the cases when $p_\mu$ is a polynomial.

Part (a) now follows from the continuity of the family $p_\mu$ with respect to the parameter $\mu$, as we increase $\mu$ from $0$ to infinity. When $\mu=0$ we have $p_0 \equiv 1$. The remarks above imply that the number of zeros and critical points of $p_\mu$ in the interval $[0,\pi/2]$ remains constant as $\mu$ ranges between two consecutive integers, and it increases exactly by one each time $\mu$ passes an integer. This means that at the initial point $\varphi=0$, $p_\mu$ and its derivative have the signs of $\sin(\mu \tfrac\pi2)$ and its derivative. 

We turn to the proof of (b) and (c). We denote by $c$ and $C$ constants that depend only on $n$.

Denote by $\zeta(\mu)>0$ the first (i.e., closest to 0) zero of $p_\mu,$ which depends smoothly on $\mu\in(m,m+2)$. Notice that, by part (a), $\zeta(m^+)=0^+$. 

Differentiating with respect to $\mu$ the equation \eqref{2} we find that
\[
\frac{\p}{\p\mu}p_\mu(\varphi)=:q_\mu(\varphi)
\]
solves
\begin{align}\label{4}
\begin{cases}
    L_\mu q_\mu = -(2\mu+n-2)  p_\mu\quad \text {in}\quad[0,\tfrac\pi2],\\
    q_\mu(\tfrac\pi2)=0, \quad \quad q_\mu'(\tfrac\pi2)=0.
\end{cases}
\end{align}
Thus differentiating $p_\mu(\zeta(\mu))=0$ with respect to $\mu$ we find
\begin{equation}\label{eq:zetaprime}
\zeta'(\mu) = -\frac{q_\mu(\zeta(\mu))}{p'_\mu(\zeta(\mu))}.
\end{equation}
We express the numerator integrating \eqref{4} on the spherical cap $\{\varphi\ge \zeta\},$
\begin{align*}
-p'_\mu(\zeta)q_\mu(\zeta)&=\int_{\{\varphi\ge\zeta\}} q_\mu L_\mu p_\mu -p_\mu L_\mu q_\mu =(2\mu+n-2)\int_{\{\varphi\ge\zeta\}}p_\mu^2\\
&=\frac{(2\mu+n-2)}{\mu(\mu+n-2)}\int_{\{\varphi\ge\zeta\}}|\nabla p_\mu|^2.
\end{align*}
Inserting this in \eqref{eq:zetaprime} we find
\[
\zeta'(\mu)= \frac{(2\mu+n-2)}{\mu(\mu+n-2)} \frac{\int_{\{\zeta\le\varphi\}}|\nabla p_\mu|^2}{(p'_\mu(\zeta))^2}.
\]
We claim that if $\zeta(\mu) \le c_0(n)$ then
\begin{equation}\label{eq:claim1}
c_1(n)\le \frac{\int_{\{\zeta\le\varphi\}}|\nabla p_\mu|^2}{(p'_\mu(\zeta))^2}\le C_1(n).
\end{equation}
This implies that $c_1\le m \zeta'\le C_1$ and that $\zeta(\mu)\le C_1/m$ for all $\mu\in (m,m+2)$.

Assuming the claim we can prove (b). 
Consider the rescaled functions
$$\tilde p_{\mu}(y):=\frac{p_{\mu}(y/m)}{p_{\mu}(0)}$$
which first vanish at $y_\mu:=m\zeta(\mu)\le C_1$. By smooth dependence from initial data of the ODE $\tilde L_\mu \tilde p_\mu =0$, and $p_{m+1}'(0)=0$, it follows that
\begin{align*}
|y_{m+1}-y_\mu|&\le C_2\big(|(\tilde p_{m+1}-\tilde p_\mu)(0)|+|(\tilde p_{m+1}'-\tilde p_\mu')(0)|\big)+\frac{C_2}{m}(m+1-\mu)\\
&=-C_2\tilde p_\mu'(0)+\frac{C_2}{m}(m+1-\mu),
\end{align*}
with $C_2>0$ uniform in $m$. We used that the derivative of the coefficients of $\tilde L_\mu$ with respect of $\mu$ is $O(1/m)$.
%
%
On the other hand, using \eqref{eq:zetaprime} we find
\[
y_{m+1}-y_\mu =  m\int_\mu^{m+1}\zeta'\ge c_1(m+1-\mu),
\]
thus (b) is proved for $m$ dimensionally large combining these bounds. For $m$ small, (b) follows by part (a) and smoothness.
%
%

The proof of (c) is similar. When $\mu$ is close to $m+1$, $\tilde p_\mu$ is close to $\cos y$, so we use the smooth dependence and the implicit function theorem to find 
\[
|y_{m+1}-y_\mu| \ge -c_2\tilde p_\mu'(0)\quad \text{for all}\quad\mu\in[m+1-\delta,m+1+\delta],
\]
for some $c_2>0$ uniform in $m$. On the other hand quation \eqref{eq:zetaprime} gives
\[
0\le y_{m+1}-y_\mu =  m\int_\mu^{m+1}\zeta'\le C_1(m+1-\mu),
\]
concluding the proof.
%
%

We are left with proving claim \eqref{eq:claim1}, we start with the upper bound.

Consider the energy density
\[
E(\varphi):= \tfrac12 (p_\mu'(\varphi))^2 + \tfrac\lambda 2 (p_\mu(\varphi))^2,
\]
which dissipates according to
\[
E'=E_\varphi = (n-2) \tan\varphi\,  (p_\mu')^2.
\]
We integrate this identity in $\{\varphi\ge \varphi_0\}$ where $\varphi_0$ is any zero of $p_\mu$, which will be chosen dimensionally small while large with respect to $\zeta(\mu)$. We find
\begin{align*}
(n-2)\int_{\{\varphi\ge \varphi_0\}}|\nabla p_\mu|^2 &= \int_{\{\varphi\ge \varphi_0\} }\cot\varphi\, E_\varphi \\
&=-\frac{(\cos\varphi_0)^{n-1}}{\sin\varphi_0}E(\varphi_0) + \int_{\{\varphi\ge \varphi_0\}}(n-1+z^{-2})E,
\end{align*}
we used that $p_\mu$ has finite energy to throw away the boundary term at the north pole in the integration by parts. Using the energy identity
\[
\int_{\{\varphi\ge \varphi_0\}}|\nabla p_\mu|^2 = \frac12\int_{\{\varphi\ge \varphi_0\}}|\nabla p_\mu|^2+\frac{\lambda}{2}\int_{\{\varphi\ge \varphi_0\}} p_\mu^2=\int_{\{\varphi\ge \varphi_0\}}E,\]
we arrive to
\[
{\sin\varphi_0}\int_{\{\varphi\ge \varphi_0\}}(1+z^{-2})E = {(\cos\varphi_0)^{n-1}}E(\varphi_0) = \tfrac12 {(\cos\varphi_0)^{n-1}}p_\mu'(\varphi_0)^2.
\]
This proves that for all $\varphi_0>0$ at which $p_\mu$ vanish we have
\begin{equation}\label{eq:l}
\int_{\{\varphi\ge \varphi_0\}}|\nabla p_\mu|^2 \le \frac{C}{{\varphi_0}}\,  p_\mu'(\varphi_0)^2.
\end{equation}
If we fix $\varphi_0(n)$ small, the uniform $C^1$ bound in \eqref{eq:capbound}, with $\delta=\varphi_0$, gives
\[
|p_\mu'(\varphi_0)|\le |p_\mu'(\varphi_0)-p_\mu'(\zeta)\cos(\sqrt\lambda\varphi_0)|+| p_\mu'(\zeta)\cos(\sqrt\lambda\varphi_0)|\le C|p_\mu'(\zeta)|,
\]
so \eqref{eq:l} becomes
\[
\int_{\{\varphi\ge \varphi_0\}}|\nabla p_\mu|^2 \le C(n)  p_\mu'(\zeta)^2.
\]
For the estimate in the region $\{\zeta\le \varphi\le \varphi_0\}$, we can use \eqref{eq:capbound} again, obtaining 
\[
\int_{\{\zeta\le \varphi\le \varphi_0\}}|\nabla p_\mu|^2 \le C  p_\mu'(\zeta)^2 \Big\{1+\int_{\zeta}^{\varphi_0} \cos(\sqrt\lambda \varphi)^2\,  d\varphi \Big\}\le C  p_\mu'(\zeta)^2.
\]
Thus the upper bound in \eqref{eq:claim1} is proved. 

The lower bound follows again just integrating $\eqref{eq:capbound}$ in the interval $[\zeta,\varphi_0]$
\[
\int_{\{\varphi\ge \zeta\}}|\nabla p_\mu|^2 \ge    p_\mu'(\zeta)^2 \int_{\zeta}^{\varphi_0} \cos(\sqrt\lambda \varphi)^2\,  d\varphi - C\varphi_0^3 p_\mu'(\zeta)^2\ge c(n) p_\mu'(\zeta)^2,
\]
since $\zeta\le C/m\le\varphi_0/4$ and $\varphi_0(n)$ can be chosen in such a way that
\[
\int_{\varphi_0/4}^{\varphi_0} \cos(\sqrt\lambda \varphi)^2\,  d\varphi-C\varphi_0^3\ge \varphi_0/4-C\varphi_0^3\ge \varphi_0/8.
\]
The point is that $\fint_{\varphi_0/4}^{\varphi_0}\cos(\sqrt\lambda \varphi)^2\,  d\varphi$ has size $\sim1$ independently from $\lambda$.
\end{proof}


\printbibliography
\end{document}